\documentclass[secthm,seceqn,amsthm,ussrhead,reqno, 12pt]{amsart}
\usepackage[utf8]{inputenc}
\usepackage[english]{babel}
\usepackage{amssymb,amsmath,amsthm,amsfonts,xcolor,enumerate,hyperref,comment,longtable,cleveref}
\usepackage[symbol]{footmisc}
\usepackage{times}
\usepackage{cite}
\usepackage{pdflscape}
\usepackage{ulem}
\usepackage[mathcal]{euscript}
\usepackage{hyperref}
\usepackage{cancel}
\usepackage{stmaryrd}

\setlength{\topmargin}{0mm}
\setlength{\textwidth}{170mm} 
\setlength{\textheight}{200mm}
\setlength{\evensidemargin}{0mm}
\setlength{\oddsidemargin}{0mm}

\sloppy

\usepackage{stmaryrd}
\usepackage{xcolor}
\newcommand{\red}{\color{red}}

\theoremstyle{plain}
\newtheorem{theorem}{Theorem}
\newtheorem{thm}[theorem]{Theorem}

\newtheorem{lem}[theorem]{Lemma}

\newtheorem{prop}[theorem]{Proposition}

\newtheorem{rem}[theorem]{Remark}

\newtheorem{defn}[theorem]{Definition}

\newtheorem{cor}[theorem]{Corollary}
\sloppy

	\begin{document}

\noindent{\Large 	
Zinbiel superalgebras}\footnote{
The first part of this work is supported by 
 Ministerio de Econom{\'\i}a y Competitividad (Spain), grant PID2020-115155GB-I00 (European FEDER support included, EU);
FCT UIDB/MAT/00212/2020, UIDP/MAT/00212/2020 and by the Spanish Government through the Ministry of Universities grant `Margarita Salas', funded by the European Union - NextGenerationEU. 
The second part of this work is supported by the Russian Science Foundation under grant 22-71-10001.  }

 \

 \bigskip 
 
\begin{center}
    
 {\bf
Luisa Mar\'{i}a Camacho\footnote{Dpto. Matem{\'a}tica Aplicada I, Universidad de Sevilla, Sevilla, Spain;\ lcamacho@us.es  },
Amir Fern\'andez Ouaridi\footnote{ Centro de Matemática, Universidade de Coimbra, Coimbra, Portugal;
University of Cadiz, Puerto Real, Spain; \    amir.fernandez.ouaridi@gmail.com},
Ivan Kaygorodov\footnote{CMA-UBI, Universidade da Beira Interior, Covilh\~{a}, Portugal;
Moscow Center for Fundamental and Applied Mathematics, Moscow,   Russia; \
Saint Petersburg  University, Russia;  
 \    kaygorodov.ivan@gmail.com} \& 
Rosa M. Navarro\footnote{Dpto. de Matem{\'a}ticas, Universidad de Extremadura, C{\'a}ceres, Spain; \ rnavarro@unex.es}

}

\end{center}

	\ 
	
	\noindent {\bf Abstract:} {\it 	
 Throughout the current paper, we extend the study of Zinbiel algebras to Zinbiel superalgebras. In particular, we show that all the Zinbiel superalgebras over an arbitrary field are nilpotent in the same way as occurs for Zinbiel algebras. Moreover and since the most important cases of nilpotent algebras or superalgebras are those with maximal nilpotency index, we study the complex null-filiform Zinbiel superalgebra, i.e. the only one single generated, proving that is unique up to isomorphism. After that, we characterise the naturally graded filiform ones and obtain low-dimensional classifications.
}

 \bigskip

	\noindent {\bf Keywords}: 
	{\it 	Zinbiel superalgebra, dual Leibniz superalgebra, nilpotent superalgebra, algebraic classification.
}

\

	\noindent {\bf MSC2020}: primary 17A30; secondary 17A70.

  \

	\tableofcontents

\section*{Introduction}

Loday introduced a class of symmetric operads generated by one bilinear operation subject to one relation making each left-normed product of three elements equal to a linear combination of right-normed products:
   $(a_1a_2)a_3=\sum\limits_{\sigma\in \mathbb{S}_3} x_{\sigma} a_{\sigma(1)}(a_{\sigma(2)}a_{\sigma(3)});$
such an operad is called a parametrized one-relation operad. For a particular choice of parameters $\{x_{\sigma}\}$, this operad is said to be regular if each of its components is the regular representation of the symmetric group; equivalently, the corresponding free algebra on a vector space $V$ is, as a graded vector space, isomorphic to the tensor algebra of $V$. 
Bremner and Dotsenko classified, over an algebraically closed field of characteristic zero, all regular parametrized one-relation operads. In fact, they proved that each such operad is isomorphic to one of the following five operads: 
the left-nilpotent operad, the associative operad, the Leibniz operad, the Zinbiel operad, and the Poisson operad \cite{bredo}. Then,  
an algebra $\bf Z$ is called a (left) {\it Zinbiel algebra} if it satisfies the identity 
$(xy)z=x(yz+zy).$  
	Zinbiel algebras were introduced by Loday in \cite{Loday2}.
Under the Koszul duality, the operad of Zinbiel algebras is dual to the operad of Leibniz algebras. 
Zinbiel algebras are also known as pre-commutative algebras \cite{ pasha}
and chronological algebras \cite{Kawski}.
Remark that a Zinbiel algebra is equivalent to a commutative dendriform algebra \cite{comdend}. Also,
the variety of Zinbiel algebras is a proper subvariety in the variety of right commutative algebras and 
each  Zinbiel algebra with the commutator multiplication gives a Tortkara algebra \cite{dzhuma}.
Zinbiel algebras also give an example of  algebras of slowly growing length \cite{GK22}. 
Recently, the notion of matching Zinbiel algebras was introduced in \cite{matchzin} and the defined identities for mono and binary Zinbliel algebras are studied in \cite{ims21}.
Moreover, Zinbiel algebras also  appeared in a study of rack cohomology \cite{rack}, 
number theory \cite{Chapoton21} and 
in  the construction of a Cartesian differential category  \cite{ip21}.
Thus, we can assert that in recent years, there has been a strong interest in the study of Zinbiel algebras in the algebraic and the operad context, see for instance  
\cite{ adashev, classifications, afk22, p-filiform,  SymZinbielSuper, dok,  anau,  matchzin, centr3zinb, dzhuma,  kppv,zinb5,  nilpotency, cam20, CT23,    ims21, identity-dual, 34}. 

Free Zinbiel algebras were shown to
be precisely the shuffle product algebra \cite{34}, 
which is under a certain interest until now \cite{cl}.
Naurazbekova proved that, over a field of characteristic zero, free Zinbiel algebras are the free associative-commutative algebras (without unity) with respect to the symmetrization multiplication and their free generators are found; also she constructed examples of subalgebras of the two-generated free Zinbiel algebra that are free Zinbiel algebras of countable rank \cite{anau}.
Nilpotent algebras play an important role in the class of Zinbiel algebras.
So, 
Dzhumadildaev and  Tulenbaev proved that each complex finite dimensional Zinbiel algebra is nilpotent  \cite{nilpotency};
this result was generalized by Towers for an arbitrary field  \cite{zinb5}.
Naurazbekova 
and Umirbaev proved that in characteristic zero any proper subvariety of the variety of Zinbiel algebras is nilpotent \cite{identity-dual}.
Finite-dimensional Zinbiel algebras with a ``big'' nilpotency index are classified in \cite{adashev,p-filiform}.
Central extensions of three-dimensional Zinbiel algebras were calculated in \cite{centr3zinb} and of filiform Zinbiel algebras in \cite{cam20}.
The description of all degenerations in the variety of complex four-dimensional Zinbiel algebras is given in     \cite{kppv} 
and the geometric classification of complex five-dimensional Zinbiel algebras is given in    \cite{afk22}.
After that, Ceballos and   Towers strudied abelian subalgebras and ideals of maximal dimension in Zinbiel algebras \cite{CT23}.

 Our main goal then, for the present paper, is to  extend the study of Zinbiel algebras to Zinbiel superalgebras. Thus, we prove that all the Zinbiel superalgebras over an arbitrary field are nilpotent analogously as occurs for Zinbiel algebras. Let us remark that the most important cases of nilpotent  superalgebras are those with maximal nilpotency index, therefore we consider the null-filiform and filiform cases. For the former, i.e. the only one single generated, we show that is unique up to isomorphism. For the latter, we characterise the naturally graded for arbitrary dimensions.  Note that among all the gradations, the most important for nilpotent algebras or superalgebras is the natural gradation which comes from the filtration defined by the descending central sequence.
 Finally, we complete the study of Zinbiel superalgebras providing  low-dimensional classifications.

 
\section{Preliminaries and basic definitions}
\subsection{Zinbiel algebras}
We recall first some definitions and basic results regarding Zinbiel algebras.  

\begin{defn}  An algebra ${\bf Z}$  
is called Zinbiel algebra if it satisfies  the identity 
\begin{center}$(x  y)  z= x (y  z+ z  y).$\end{center}
\end{defn} 

For a given Zinbiel (super)algebra ${\bf Z}$,  it is  defined the following sequence:
$${\bf Z}^1={\bf Z},  \quad {\bf Z}^{k+1}={\bf Z}  {\bf Z}^{k}.$$

\begin{defn}   A Zinbiel (super)algebra ${\bf Z}$ is called nilpotent if there exists $s \in \mathbb{N}$ such that ${\bf Z}^s=0$.  The minimal number $s$ satisfying this property is called the nilpotency index of the (super)algebra ${\bf Z}$. 

\end{defn}

 It is not difficult to see that the index of nilpotency of an arbitrary $n$-dimensional nilpotent   Zinbiel  (super)algebra does not exceed the number $n+1$.
Since every finite-dimensional Zinbiel algebra over a field is nilpotent \cite{zinb5}, it made perfect sense to start studying those with the maximal index of nilpotency, i.e. null-filiform. 

 \begin{defn} An $n$-dimensional Zinbiel (super)algebra ${\bf Z}$ is called null-filiform if ${\rm dim} \ {\bf Z}^{i}=n+1-i$. 
 
 \end{defn}
Let us note that a Zinbiel algebra is null-filiform if and only if it is one-generated.
 The classification of complex null-filiform Zinbiel algebras was given in \cite{classifications}.
 
 \begin{thm} An arbitrary $n$-dimensional null-filiform Zinbiel algebra is isomorphic to the following algebra:
 \begin{center}
     $e_i  e_j =C_{i+j-1}^j e_{i+j}.$
      \end{center}
  \end{thm}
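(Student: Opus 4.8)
The plan is to exploit the fact, recalled just above the statement, that a null-filiform Zinbiel algebra is one-generated, and to manufacture from a single generator an explicit basis in which the Zinbiel identity forces the multiplication to take the stated shape. I would fix a generator $x$ of ${\bf Z}$ and introduce the right-normed monomials $e_1 = x$ and $e_{k+1} = x\, e_k$ for $k \ge 1$. The first goal is to establish the multiplication formula $e_i e_j = C_{i+j-1}^{j}\, e_{i+j}$ as a purely formal consequence of the defining identity, valid for \emph{any} element $x$ of \emph{any} Zinbiel algebra and requiring no linear independence.

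This computation proceeds by induction on $s = i+j$. The base case gives $e_1 e_1 = x x = e_2$ with coefficient $C_1^1 = 1$, and whenever $i = 1$ the identity $e_1 e_j = x\, e_j = e_{j+1}$ holds by definition. For $i \ge 2$ I would write $e_i = x\, e_{i-1} = e_1 e_{i-1}$ and apply the Zinbiel identity in the form $(e_1 e_{i-1}) e_j = e_1(e_{i-1} e_j) + e_1(e_j e_{i-1})$; by the inductive hypothesis both $e_{i-1} e_j$ and $e_j e_{i-1}$ are scalar multiples of $e_{i+j-1}$, whence
\[
e_i e_j = \left( C_{i+j-2}^{j} + C_{i+j-2}^{i-1} \right) e_1 e_{i+j-1} = \left( C_{i+j-2}^{j} + C_{i+j-2}^{i-1} \right) e_{i+j},
\]
and the coefficient collapses to $C_{i+j-1}^{j}$ by Pascal's rule, using the symmetry $C_{i+j-2}^{i-1} = C_{i+j-2}^{j-1}$. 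The delicate point of this step is the \emph{rigidity} it encodes: the Zinbiel relation strictly lowers the left index, and multiplying a pure multiple of $e_{i+j-1}$ by $x$ on the left returns a pure multiple of $e_{i+j}$, so no higher-order ``tail'' can ever appear. This is the computational heart of the argument and is precisely what rules out the deformations one would otherwise have to worry about.

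It then remains to show that $\{e_1, \dots, e_n\}$ is a basis. Spanning is immediate, since the linear span of the $e_k$ contains $x$ and is closed under multiplication by the formula above, hence equals $\langle x \rangle = {\bf Z}$; the same formula yields ${\bf Z}^m = \mathrm{span}\{e_k : k \ge m\}$ by an easy induction. Here is where the genuinely non-formal input enters, and what I expect to be the main obstacle: linear independence is not automatic from the multiplication table alone and must be extracted from the null-filiform hypothesis $\dim {\bf Z}^i = n+1-i$, which makes every quotient ${\bf Z}^i/{\bf Z}^{i+1}$ one-dimensional. Comparing dimensions of ${\bf Z}^i = \mathrm{span}\{e_i, e_{i+1}, \dots\}$ and ${\bf Z}^{i+1} = \mathrm{span}\{e_{i+1}, e_{i+2}, \dots\}$ forces $e_i \notin {\bf Z}^{i+1}$ for $1 \le i \le n$, while ${\bf Z}^{n+1} = 0$ gives $e_k = 0$ for $k > n$. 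Since the nonzero $e_i$ occupy distinct filtration layers with nonzero images in the one-dimensional quotients, they are linearly independent and therefore form a basis. In this basis the multiplication is exactly $e_i e_j = C_{i+j-1}^{j}\, e_{i+j}$, with the convention $e_{i+j} = 0$ when $i+j > n$, which yields both the asserted normal form and uniqueness up to isomorphism.
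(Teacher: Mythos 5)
Your proof is correct. One thing to be aware of: the paper does not actually prove this statement --- it is recalled from \cite{classifications} --- so the closest written-out argument in the paper is the proof of its superalgebra analogue (the null-filiform Zinbiel superalgebra theorem), and your route differs from that one in a genuine way. The paper's proof fixes the generator and then runs a cascade of separate inductions organized by the second factor: first all products with $e_1$, then all products with $e_2$, and only then the general products, where the key step rewrites $e_{2(k+1)+1}e_{2l}=\frac{1}{k+1}(e_{2k+1}e_2)e_{2l}$ (and similarly with $\frac{1}{k}$), so it divides by integers and is tied to characteristic zero. You instead run a single induction on the total degree $s=i+j$, lowering the first index via $e_i=e_1e_{i-1}$ and the Zinbiel identity, and the coefficient recursion closes by the symmetry $C_{i+j-2}^{i-1}=C_{i+j-2}^{j-1}$ and Pascal's rule $C_{i+j-2}^{j}+C_{i+j-2}^{j-1}=C_{i+j-1}^{j}$; no division occurs, so the multiplication formula is a formal identity satisfied by the right-normed powers of any element over any field. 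Your treatment of the one non-formal point is also correct and cleanly isolated: the formula gives ${\bf Z}^m=\mathrm{span}\{e_k : k\ge m\}$, and the null-filiform dimensions $\dim {\bf Z}^i=n+1-i$ force $e_k=0$ for $k>n$ and $\{e_1,\dots,e_n\}$ to be a basis, which yields the normal form and uniqueness. What the paper's stepwise style buys is an explicit table of auxiliary products (with $e_1$, with $e_2$) in the same format it uses later for the naturally graded filiform analysis; what yours buys is compactness and independence of the characteristic.
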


After having obtained the aforementioned Zinbiel algebras, the next case for consideration was filiform. Let us denote by $L_x$ the operator of left multiplication on element $x$. 
 Then,  for the operator $L_x$ it has been defined  as a descending sequence $C(x)=(n_1,n_2, \ldots ,n_k)$ with $n_1 + \ldots +n_k =n$, which consists of the dimensions of the Jordan blocks of the operator $L_x$. In the set of such sequences, we consider the lexicographic order, 
 that is, $C(x) = (n_1, n_2, \ldots , n_k) < C(y) = (m_1, m_2, \ldots , m_s)$ if there exists $i$ such that $n_i <m_i$ and $n_j =m_j$ for $j<i$.
 
 \begin{defn}   The sequence $C({\bf Z}) = {\mbox max}\{C(x) : x \in {\bf Z}^1 \backslash {\bf Z}^2\}$ is called the characteristic sequence of the Zinbiel algebra ${\bf Z}$.
 
\end{defn}

\begin{defn} The Zinbiel algebra ${\bf Z}$ is called $p$-filiform if $C({\bf Z})=(n-p,\underbrace{1,\ldots,1}_{p}).$
If $p = 0$ (respectively, $p = 1$), then ${\bf Z}$ is called null-filiform (respectively, filiform) Zinbiel algebra.

\end{defn}

Let ${\bf Z}$ be a finite-dimensional complex Zinbiel algebra with the nilpotency index equal to $s$. 
Let us consider   ${\bf Z}_i={\bf Z}^{i}/{\bf Z}^{i+1}$ and it is denoted by ${\rm gr}({\bf Z})={\bf Z}_1 \oplus {\bf Z}_2 \oplus \ldots \oplus {\bf Z}_{s-1}$. It can be easily checked that ${\rm gr}({\bf Z})$ is a graded Zinbiel algebra. If ${\bf Z}$ and ${\rm gr}({\bf Z})$ are isomorphic, then ${\bf Z}$ is said to be naturally graded. The classification of complex naturally graded filiform Zinbiel algebras was given in  \cite{classifications}:

\begin{thm}  An arbitrary $n$-dimensional ($n \geq 5$) naturally graded complex filiform Zinbiel algebra is isomorphic to the following algebra:

\begin{center}
$e_i  e_j =C_{i+j-1}^j e_{i+j},  \quad \mbox{for \ } \quad 2\leq i+j \leq n-1.$
\end{center}
\end{thm}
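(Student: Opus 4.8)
The plan is to reduce to the associated graded algebra, pin down the dimensions of the homogeneous components, fix an adapted basis dictated by the characteristic sequence, and then let the Zinbiel identity do the arithmetic. Since $\mathbf{Z}$ is naturally graded I may assume $\mathbf{Z}=\mathrm{gr}(\mathbf{Z})=\mathbf{Z}_1\oplus\cdots\oplus\mathbf{Z}_{n-1}$ with $\mathbf{Z}_i=\mathbf{Z}^i/\mathbf{Z}^{i+1}$. The length-$(n-1)$ Jordan chain guaranteed by $C(\mathbf{Z})=(n-1,1)$ shows $\mathbf{Z}^{n-1}\neq 0$, so the nilpotency index is $n$ (it cannot reach $n+1$, as that forces null-filiform), and each $\mathbf{Z}_i$ with $1\leq i\leq n-1$ is nonzero. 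As these $n-1$ lines-or-planes have total dimension $n$, exactly one component is two-dimensional. If that component were some $\mathbf{Z}_i$ with $i\geq 2$, then $\dim(\mathbf{Z}/\mathbf{Z}^2)=\dim\mathbf{Z}_1=1$, so $\mathbf{Z}$ would be one-generated and hence null-filiform, contradicting $C(\mathbf{Z})=(n-1,1)$. Thus $\dim\mathbf{Z}_1=2$ and $\dim\mathbf{Z}_i=1$ for $2\leq i\leq n-1$.

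Next I would build the basis. Choose $e_1\in\mathbf{Z}_1$ realizing the characteristic sequence, so $L_{e_1}$ has a Jordan block of size $n-1$; since $L_{e_1}$ is homogeneous of degree $+1$, this block is spanned by a homogeneous chain beginning in degree $1$. Setting $e_{i+1}:=e_1 e_i$ for $1\leq i\leq n-2$ then produces nonzero vectors spanning $\mathbf{Z}_2,\dots,\mathbf{Z}_{n-1}$, with $e_1 e_{n-1}\in\mathbf{Z}_n=0$; this also uses up the rescaling freedom of $e_2,\dots,e_{n-1}$, so that $e_1 e_i=e_{i+1}$ exactly. Because $L_{e_1}\colon\mathbf{Z}_1\to\mathbf{Z}_2$ has a nontrivial kernel (dimensions $2\to 1$) not containing $e_1$, I can complete $\{e_1\}$ to a basis $\{e_1,f\}$ of $\mathbf{Z}_1$ with $e_1 f=0$, which is the size-$1$ Jordan block.

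By homogeneity every product has the form $e_i e_j=\gamma_{ij}e_{i+j}$, with $e_{i+j}=0$ for $i+j\geq n$. Applying the Zinbiel identity $(xy)z=x(yz)+x(zy)$ to the triple $(e_1,e_i,e_j)$ and using $e_1 e_i=e_{i+1}$ gives, for $i+j\leq n-2$, the recurrence $\gamma_{i+1,j}=\gamma_{ij}+\gamma_{ji}$ with base values $\gamma_{1j}=1$. The key observation is that this is precisely Pascal's rule, so an induction on the total degree $i+j$ yields $\gamma_{ij}=\binom{i+j-1}{j}$ for all $i,j\geq 1$ with $i+j\leq n-1$; at the boundary $i+j=n-1$ both sides of the recurrence vanish, consistently with $e_1 e_{n-1}=0$. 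This recovers $e_i e_j=C_{i+j-1}^{\,j}\,e_{i+j}$ on the span of $e_1,\dots,e_{n-1}$.

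It then remains to show the second generator annihilates everything, so it can be relabelled $e_n$. Feeding the triples $(e_1,f,e_1)$, $(f,e_1,e_1)$ and $(f,f,e_1)$ into the identity forces $f e_1=0$, $ff=0$ and the first vanishing products, each reduction producing a nonzero scalar multiple of some $e_k$ that must vanish. An induction on $k$ then closes the argument: from $(e_1 e_k)f$ one gets $e_{k+1}f=0$, and from $(f e_1)e_k=f(e_1 e_k)+f(e_k e_1)$ one gets $(1+\gamma_{k1})\,f e_{k+1}=0$ with $1+\gamma_{k1}=1+k\neq 0$ in characteristic $0$, hence $f e_{k+1}=0$. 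Thus all products involving $f$ vanish, and writing $e_n:=f$ gives exactly the stated algebra; as every structure constant was forced, any two such algebras are isomorphic to it. I expect the genuine work to lie in the last two paragraphs: correctly matching the recurrence to Pascal's rule (including the degenerate boundary) and selecting the right instances of the non-symmetric Zinbiel identity to propagate the vanishing of the $f$-products. The dimension count and the Jordan-chain setup are routine by comparison.
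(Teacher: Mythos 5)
First, a point of comparison: the paper does not prove this statement at all — it is quoted as a known result from the reference [classifications] (Adashev–Khudoyberdiyev–Omirov) — so your argument can only be judged on its own merits, not against a proof in this text. On those merits, most of your plan is sound: the dimension count forcing $\dim\mathbf{Z}_1=2$ and $\dim\mathbf{Z}_i=1$ for $i\geq 2$, the Pascal-rule recurrence $\gamma_{i+1,j}=\gamma_{ij}+\gamma_{ji}$ with base $\gamma_{1j}=1$ (which indeed pins down $\gamma_{ij}=C_{i+j-1}^{j}$ by induction on $i+j$), and the propagation of the vanishing of all products involving $f$ are all correct; I checked the individual identities and they work.

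There is, however, one genuine gap: the step ``Setting $e_{i+1}:=e_1e_i$ then produces nonzero vectors.'' The characteristic sequence only tells you that $L_{e_1}$ has a Jordan block of size $n-1$, whose (homogeneous, degree-one) cyclic vector is some $v\in\mathbf{Z}_1$ — nothing forces $v$ to be proportional to $e_1$, i.e.\ nothing yet rules out $e_1e_1=0$, in which case your chain dies immediately. This is not a pedantic worry: in the Lie/Leibniz filiform theory the chain can \emph{never} start at the characteristic element (there $[e_1,e_1]=0$ always), and, tellingly, your argument never uses the hypothesis $n\geq 5$, even though the theorem is false for $n=4$ precisely because of naturally graded filiform algebras with $e_1e_1=0$. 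The fix is short but Zinbiel-specific: if $e_1e_1=0$, then right-commutativity gives $(e_1x)e_1=(e_1e_1)x=0$, and the Zinbiel identity gives $e_1(e_1x)=-e_1(xe_1)$, whence $L_{e_1}^3x=e_1\big(e_1(e_1x)\big)=-e_1\big((e_1x)e_1\big)=0$ for all $x$; so $L_{e_1}^3=0$, contradicting the existence of a block of size $n-1\geq 4$ (this is exactly where $n\geq5$ enters). Once $e_1e_1\neq 0$ is known, it spans $\mathbf{Z}_2$, and since the chain through $v$ shows $L_{e_1}\colon\mathbf{Z}_i\to\mathbf{Z}_{i+1}$ is bijective for $2\leq i\leq n-2$, the chain starting at $e_1$ has full length $n-1$ and your construction goes through. (A smaller omission of the same kind: choosing the characteristic element inside $\mathbf{Z}_1$ also needs a one-line degree argument, since a priori it only lies in $\mathbf{Z}\setminus\mathbf{Z}^2$; this one is routine.)
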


Recently in \cite{SymZinbielSuper} there have been studied symmetric (left and right) Zinbiel superalgebras.  Thus,  we extract the following results concerning to (left) Zinbiel superalgebras which we will refer to just as Zinbiel superalgebras.  

\subsection{Zinbiel superalgebras}
The superversion of the   Zinbiel algebras can be obtained in the usual way.

\begin{defn} 
Let ${\bf Z}={\bf Z}_{\bar{0}} \oplus {\bf Z}_{\bar{1}}$ be a ${\mathbb Z}_2$-graded vector space with a bilinear map on ${\bf Z}$ such that 
${\bf Z}_{i}   {\bf Z}_{j} \subset {\bf Z}_{i+j}$.  
${\bf Z}$ is called a  Zinbiel superalgebra if, 
for all homogeneous $x, y, z \in {\bf Z}_{\bar 0} \cup {\bf Z}_{\bar{1}}$ it satisfies 

\begin{center} 
$(x y )z=x\big(yz+ (-1)^{|y| |z|} zy\big).$
\end{center}
As usual,  for  $x \in {\bf Z}_{\bar 0} \cup {\bf Z}_{\bar{1}}$,  
it is defined the corresponding endomorphism of ${\bf Z}$ by 
$L_x(y)=xy$  for all $y \in {\bf Z}_{\bar 0} \cup {\bf Z}_{\bar{1}}$ 
which is called the left multiplication by $x$.  
\end{defn}

\begin{rem}
In the same way that any Zinbiel algebra is a right-commutative algebra, any Zinbiel superalgebra is a right-commutative superalgebra. 
Namely, for any homogeneous $x, y$ and $z$, 
it satisfies the superidentity:
\begin{center}$(x y)z=(-1)^{|y| |z|} (xz)y.$
\end{center}
\end{rem}

Next,  we extend the definitions and first results of Zinbiel algebras to Zinbiel superalgebras. 
Thus,  for a given Zinbiel superalgebra ${\bf Z}={\bf Z}_{\bar{0}} \oplus {\bf Z}_{\bar{1}}$,  we define the following sequence:
$${\bf Z}^1={\bf Z},  \quad {\bf Z}^{k+1}={\bf Z}  {\bf Z}^{k}.$$

Let us note that also they can be defined two sequences as follows: 
$${\bf Z}_{\bar 0}^1={\bf Z}_{\bar 0},  \quad 
{\bf Z}_{\bar 0}^{k+1}={\bf Z}_{\bar 0}  {\bf Z}_{\bar 0}^{k} \quad 
\mbox{ and }  \quad 
{\bf Z}_{\bar 1}^1={\bf Z}_{\bar 1},  \quad 
{\bf Z}_{\bar 1}^{k+1}={\bf Z}_{\bar 0}  {\bf Z}_{\bar 1}^{k}.$$

 Along the last section of the present paper, we will show that all  Zinbiel superalgebras over an arbitrary field are nilpotent,   
 therefore the study of null-filiform and filiform is crucial for the understanding of  finite-dimensional Zinbiel superalgebras.

 Next, we introduce, following the spirit of    the theory of nilpotent superalgebras for Lie and Leibniz cases (see for example \cite{zero-filiform} and reference therein),  the concepts of characteristic sequence and filiform for superalgebras.
  Firstly, let us denote by $L_x$ the operator of left multiplication on a homogeneous even element $x \in {\bf Z}_{\bar 0}$.
 Therefore, we have: 
$L_x :{\bf Z}_{\bar 0} \to {\bf Z}_{\bar 0}$
and we denote by $C_0(x)$ the corresponding descending sequence of the dimensions of Jordan blocks of the operator $L_x$ acting on ${\bf Z}_{\bar 0}$. Analogously and since 
$L_x :{\bf Z}_{\bar 1} \to {\bf Z}_{\bar 1}$
 we denote by $C_1(x)$ the corresponding descending sequence of the dimensions of Jordan blocks of the operator $L_x$ acting on ${\bf Z}_{\bar 1}$.
 Then,  with regard to the lexicographic order we have the following definition.
 
 \begin{defn} The sequence 
 $$
C({\bf Z})=\left(\left.\max_{x\in {\bf Z}_{\bar 0}\backslash {\bf Z}_{\bar 0}^2} C_0(x) \ \right|
\max_{y\in {\bf Z}_{\bar 0}\backslash {\bf Z}_{\bar 0}^2} C_1(y)\right),
$$
 is called the characteristic sequence of the Zinbiel superalgebra ${\bf Z}$.
 
\end{defn}
 Along the present work we assume that both characteristic sequences of the definition are obtained by the same generator element $x\in {\bf Z}_{\bar 0}\backslash {\bf Z}_{\bar 0}^2$ which is usually called characteristic element.
 
\begin{defn}
 A  Zinbiel superalgebra ${\bf Z}={\bf Z}_{\bar{0}} \oplus {\bf Z}_{\bar{1}}$, with ${\rm dim}({\bf Z}_{\bar{0}})=n$ and ${\rm dim}({\bf Z}_{\bar{1}})=m$, is said to be filiform if its characteristic sequence is exactly $C({\bf Z})=(\left. n-1 \right| m ).$
\end{defn} 
 \begin{rem}
 Note that if ${\bf Z}$ is a filiform Zinbiel superalgebra, then ${\bf Z}_{\bar{0}}$ is a filiform Zinbiel algebra.
 \end{rem}
  
  Let us remark that among all the gradations, the most important for nilpotent structures is the natural gradation which comes from the filtration defined by the descending central sequence. Recently, it has been defined the concept of naturally graded for both nilpotent superalgebras, Lie and Leibniz \cite{naturallygraded}. Next we extend this concept for complex Zinbiel superalgebras which are all of them nilpotent.

Consider ${\bf Z}={\bf Z}_{\bar{0}} \oplus {\bf Z}_{\bar{1}}$ to be a complex Zinbiel superalgebra. It can be seen that the sequences aforementioned
$ \{ {\bf Z}_{\bar 0}^k \}$ and $\{{\bf Z}_{\bar 1}^{k} \}$
define a filtration over ${\bf Z}_{\bar 0}$ and ${\bf Z}_{\bar 1}$, respectively. 
If we denote ${\mathfrak z}^{i}_{\bar 0}:={\bf Z}_{\bar 0}^{i-1}/{\bf Z}_{\bar 0}^{i}$ and ${\mathfrak z}^{i}_{\bar 1}:={\bf Z}_{\bar 1}^{i-1}/{\bf Z}_{\bar 1}^{i}$, then it is verified that 
${\mathfrak z}^{i}_{\bar 0} {\mathfrak z}^{j}_{\bar 0} \subset {\mathfrak z}^{i+j}_{\bar 0}$  and  
${\mathfrak z}^{i}_{\bar 0} {\mathfrak z}^{j}_{\bar 1} \subset {\mathfrak z}^{i+j}_{\bar 1}.$

\begin{defn}
Given a complex Zinbiel superalgebra ${\bf Z}={\bf Z}_{\bar{0}} \oplus {\bf Z}_{\bar{1}}$, consider ${\mathfrak z}^{i}={\mathfrak z}^{i}_{\bar 0} \oplus {\mathfrak z}^{i}_{\bar 1}$, with 
${\mathfrak z}^{i}_{\bar 0}={\bf Z}_{\bar 0}^{i-1}/{\bf Z}_{\bar 0}^{i}
 \mbox{  and  }  {\mathfrak z}^{i}_{\bar 1}={\bf Z}_{\bar 1}^{i-1}/{\bf Z}_{\bar 1}^{i}$. 
 Thus, ${\bf Z}$ is said to be  naturally graded if the following conditions hold:
\begin{itemize}
\item[$1.$]  ${\rm gr} ({\bf Z})=\sum_{i \in \mathbb{N}} {\mathfrak z}^{i}$  is a graded  superalgebra (${\mathfrak z}^{i} {\mathfrak z}^{j} \subset {\mathfrak z}^{i+j}$),
\item[$2.$] ${\bf Z}$ and ${\rm gr}({\bf Z})$ are isomorphic.
\end{itemize}
\end{defn}
 
 \section{Null-filiform Zinbiel superalgebras}
\begin{thm} Let ${\bf Z}$ be an $n$-dimensional  null-filiform Zinbiel superalgebra with ${\rm dim}({\bf Z}_{\bar{1}})\neq 0$.  
Then ${\bf Z}$ is isomorphic to  the following superalgebra which occurs only for the cases 
${\rm dim}({\bf Z}_{\bar{0}})={\rm dim}({\bf Z}_{\bar{1}})$ and 
${\rm dim}({\bf Z}_{\bar{1}})={\rm dim}({\bf Z}_{\bar{0}})+1:$
\begin{longtable}{lcllcllcl}
$e_{2k+1}   e_{2l}$&$=$&$C_{k+l}^{k} e_{2k+2l+1},$ \quad &
$e_{2k}   e_{2l}$&$=$&$C_{k+l-1}^{l} e_{2k+2l},$ \quad &
$e_{2k+1}   e_{2l+1}$&$=$&$C_{k+l}^{l} e_{2k+2l+2},$ \end{longtable}
\noindent where $e_{2k},e_{2l} \in {\bf Z}_{\bar{0}}$ and $e_{2k+1},e_{2l+1} \in {\bf Z}_{\bar{1}}.$ 
\end{thm}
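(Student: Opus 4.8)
The plan is to exploit that a null-filiform superalgebra is single-generated and then to pin down all structure constants from a single recursion extracted from the Zinbiel super-identity. First I would record that, since $\dim {\bf Z}^i/{\bf Z}^{i+1}=1$ for every $i$, the quotient ${\bf Z}/{\bf Z}^2$ is a one-dimensional graded space, hence homogeneous; were its generator even, the whole algebra would sit in ${\bf Z}_{\bar 0}$, contradicting $\dim {\bf Z}_{\bar 1}\neq 0$. So the generator $x$ is odd. Choosing a filtration-adapted basis with $e_i$ spanning ${\bf Z}^i$ modulo ${\bf Z}^{i+1}$, the parity of $e_i$ equals $i \bmod 2$, so odd indices are odd and even indices are even. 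Counting these two families inside $e_1,\dots,e_n$ gives the dimension dichotomy at once: if $n$ is even then $\dim {\bf Z}_{\bar 1}=\dim {\bf Z}_{\bar 0}$, and if $n$ is odd then $\dim {\bf Z}_{\bar 1}=\dim {\bf Z}_{\bar 0}+1$, which are exactly the two admissible cases in the statement.

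Next I would set $e_{k+1}:=e_1 e_k$ recursively. Using ${\bf Z}^i{\bf Z}^j\subseteq {\bf Z}^{i+j}$ one checks that ${\bf Z}^{k+1}/{\bf Z}^{k+2}$ is spanned by $e_1 e_k$, so $e_1 e_k\notin {\bf Z}^{k+2}$ and this is a legitimate adapted basis with $e_1 e_k=e_{k+1}$ by construction. I then prove by induction on the total degree $s=i+j$ that every product is an \emph{exact} scalar multiple $e_i e_j=a_{ij}e_{i+j}$: writing $e_i=e_1 e_{i-1}$ and feeding $(e_1,e_{i-1},e_j)$ into the Zinbiel super-identity expresses $e_i e_j$ as $e_1$ times an element of ${\bf Z}^{s-1}$, which by the inductive hypothesis is a multiple of $e_{s-1}$, whence $e_i e_j$ is a multiple of $e_1 e_{s-1}=e_s$ with no higher terms. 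The same computation yields the recursion $a_{ij}=a_{i-1,j}+(-1)^{|e_{i-1}||e_j|}a_{j,i-1}$; since its right-hand side has total degree $s-1$, the $a_{ij}$ are uniquely determined by the normalization $a_{1,k}=1$. This settles the uniqueness half of the theorem, using only the $i=1$ instance of the identity, which always holds.

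It then remains to identify the solution. Splitting into the four parity patterns for $(i-1,j)$, I would verify that the claimed values together with the vanishing $e_{2k}e_{2l+1}=0$ satisfy the recursion; each case collapses to one application of Pascal's rule $C_{p+q}^{p}=C_{p+q-1}^{p}+C_{p+q-1}^{p-1}$ combined with the symmetry $C_{p+q}^{p}=C_{p+q}^{q}$. In particular the odd-by-odd case forces the even-by-odd products to vanish, which is precisely why they do not appear in the multiplication table. For completeness, to confirm that the formula defines a genuine Zinbiel superalgebra for each admissible $n$ (existence), one either checks the full three-index identity $a_{ij}a_{i+j,k}=(a_{jk}+(-1)^{|e_j||e_k|}a_{kj})a_{i,j+k}$ directly, or realizes the algebra as the one-odd-generator truncation of the free Zinbiel (shuffle) superalgebra, where the identity is automatic.

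The main obstacle is this final combinatorial step: the half-index binomial coefficients replace the classical $C_{i+j-1}^{j}$, and the signs $(-1)^{|e_j||e_k|}$ interact nontrivially with the four parity patterns, so keeping the divided-power-type coefficients aligned—and in particular producing the even-by-odd zeros—requires care. By contrast the single-index recursion, the exactness (no-higher-terms) argument, and the dimension count are routine; the real work lies in the parity bookkeeping for the recursion and, if one wants independent existence, in the full three-index verification or the clean identification with the shuffle model.
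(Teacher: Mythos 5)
Your proposal is correct, and it organizes the argument in a genuinely different way from the paper, even though both proofs start from the same setup (the generator must be odd, the adapted basis $e_{k+1}=e_1e_k$ with alternating parities, and the resulting dichotomy $\dim {\bf Z}_{\bar 1}=\dim {\bf Z}_{\bar 0}$ or $\dim {\bf Z}_{\bar 0}+1$). The paper proceeds constructively: it first computes all products with $e_1$ on the right ($e_{2k}e_1=0$, $e_{2k+1}e_1=e_{2k+2}$), then all products with $e_2$ on either side, deduces $e_{2k}e_{2l+1}=0$, and finally proves each of the three binomial formulas by a separate induction that raises the left index by two through right multiplication by $e_2$ (writing $e_{2k+3}=\tfrac{1}{k+1}\,e_{2k+1}e_2$), with explicit factorial manipulations. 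You instead use only the instances of the superidentity whose first argument is $e_1$, peeling $e_1$ off the left factor to get the single recursion $a_{ij}=a_{i-1,j}+(-1)^{|e_{i-1}||e_j|}a_{j,i-1}$ with normalization $a_{1k}=1$; induction on $i+j$ then gives both the exactness statement ($e_ie_j$ is precisely a multiple of $e_{i+j}$, a point the paper uses tacitly) and the uniqueness of all structure constants, so the theorem reduces to checking that the claimed binomials together with $a_{2k,2l+1}=0$ solve the recursion. I verified all four parity patterns: the odd--even and even--even cases are Pascal's rule combined with $C^{p}_{p+q}=C^{q}_{p+q}$, the odd--odd case uses the even--odd zeros, and the even--odd case of the recursion carries the sign $(-1)^{|e_{2k-1}||e_{2l+1}|}=-1$, making two equal odd--odd coefficients cancel, which is exactly where the zeros come from. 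Your route buys uniformity (one identity instance, one recursion, one verification, no auxiliary $e_2$-lemmas) at the price of having to guess the closed form in advance, whereas the paper's inductions generate the coefficients without guessing. Two further remarks: both arguments rest on the same unproved-but-standard fact that ${\bf Z}^i{\bf Z}^j\subseteq{\bf Z}^{i+j}$ in the super setting (needed for the adapted basis and the exactness), so you are on equal footing with the paper there; and your final concern about independent existence, while a nice bonus, is not needed for the statement as phrased, since the hypothesized superalgebra ${\bf Z}$ itself witnesses that the derived multiplication table satisfies the Zinbiel superidentity.
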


\begin{proof} 
It is clear, that each null-filiform Zinbiel superalgebra is one-generated. 
If it is generated by an even element, then it has zero odd part.
Hence,    we can suppose that our superalgebra is generated by an odd element $e_1$ and then, in the same way, as for null-filiform Zinbiel algebras,  we can consider:
$$ e_2:=e_1 e_1, \quad e_3:=e_1(e_1 e_1), \quad \ldots, \quad e_n:= e_1(e_1 \ldots (e_1 (e_1 e_1)) ).$$

Let us remark that the elements above are linearly independent.   This latter fact allows us to regard $\{ e_1, e_2, \dots , e_n\}$ as a basis of the superalgebra ${\bf Z}$  being,   $e_{2k+1}$ odd basis vectors and $e_{2k}$ even ones.  Moreover,  we have only two possibilities: either ${\rm dim}({\bf Z}_{\bar{0}})={\rm dim}({\bf Z}_{\bar{1}})$ or ${\rm dim}({\bf Z}_{\bar{1}})={\rm dim}({\bf Z}_{\bar{0}})+1$.

Let us note that by construction we have 
\begin{equation}
\label{eq1} e_1 e_i=e_{i+1}.
\end{equation}
Now,  we prove by induction 
\begin{equation}
\label{eq2} e_{2k} e_1 =0 \ \mbox{and} \  e_{2k+1} e_1=e_1 e_{2k+1}=e_{2k+2}.
\end{equation}
For $k=1$ the equations hold by considering the following Zinbiel superidentity:
$$e_2 e_1=(e_1 e_1)e_1= e_1(e_1 e_1)-e_1(e_1 e_1)=0$$ and then 
$$e_3 e_1=(e_1 e_2)e_1= e_1(e_2 e_1)+e_1(e_1 e_2)=e_1 e_3$$
Suppose that the equations hold for $k$,  thus 
$$e_{2k+2} e_1=(e_1 e_{2k+1})e_1= e_1(e_{2k+1} e_1)-e_1(e_1 e_{2k+1})=0$$ and then 
$$e_{2k+3} e_1=(e_1 e_{2k+2})e_1= e_1(e_{2k+2} e_1)+e_1(e_1 e_{2k+2})=e_1 e_{2k+3}$$
and therefore we have equations (\ref{eq2}) for $k+1$. Following we prove, also by induction,  the equations 
\begin{equation}
\label{eq3} e_{2k} e_2 =k e_{2k+2}, \ e_{2k+1} e_2=(k+1)e_{2k+3},  \
e_2 e_{2k}=e_{2k+2}, \ e_2 e_{2k+1}=0.
\end{equation}

 The equations hold for $k=1$ by considering the following Zinbiel superidentities:
\begin{longtable}{lclclclcl}
$e_2 e_2$&$=$&$(e_1 e_1)e_2 $&$=$&$ e_1(e_1 e_2)+e_1(e_2e_1)$&$=$&$e_1e_3$&$=$&$e_4,$\\  
$e_3 e_2$&$=$&$(e_1 e_2)e_2$&$=$&$ e_1(e_2 e_2)+e_1(e_2 e_2)$&$=$&$2e_5,$\\
$0$&$=$&$(e_2 e_1)e_2$&$=$&$e_2(e_1 e_2)+e_2(e_2 e_1)$&$=$&$e_2e_3,$
\end{longtable}
supposing that the equations hold for $k$ we get the equations for $k+1$:
\begin{longtable}{rclclclclclcl}
$e_{2(k+1)} e_2$&$=$&$(e_1 e_{2k+1})e_2$&$=$&$ e_1(e_{2k+1} e_2)+e_1(e_2e_{2k+1})$&$=$&$(k+1)e_{2(k+1)+2},$\\
$e_2e_{2(k+1)} $&$=$&$(e_1 e_1)e_{2k+2} $&$=$&$ e_1(e_1 e_{2k+2} )+e_1(e_{2k+2}e_{1})$&$=$&$e_{2(k+1)+2},$\\ 
$e_{2(k+1)+1} e_2$&$=$&$(e_1 e_{2k+2})e_2$&$=$&$ e_1(e_{2k+2} e_2)+e_1(e_2e_{2k+2})$&$=$&$((k+1)+1)e_{2(k+1)+3},$\\ 
$0$&$=$&$(e_2 e_1)e_{2k+2}$&$=$&$e_2(e_1 e_{2k+2})+e_2(e_{2k+2} e_1)$&$=$&$e_2e_{2(k+1)+1}.$
\end{longtable}
Next,  on account of equations (\ref{eq2}) together with the fact that $e_{2k}e_{2l}$ is a multiple of $e_{2k+2l}$ we have 
\begin{center}$0=(e_{2k} e_{2l})e_1= e_{2k}(e_{2l} e_1)+e_{2k}(e_1 e_{2l})=e_{2k}e_{2l+1},$
\end{center}
which leads to the equation
 $e_{2k}  e_{2l+1}=0.$
Following we prove by induction the equation 
\begin{equation}
\label{eq5} e_{2k+1}  e_{2l}=C_{k+l}^{k} e_{2k+2l+1}.
\end{equation}
From the equality (\ref{eq1}) we get equation (\ref{eq5}) for $k=0$ and every $l$.  Now after supposing that equation (\ref{eq5}) holds for $k$ and every $l$,  we obtain that it also holds for $k+1$ and every $l$ taking into account equality (\ref{eq3}):
\begin{center}
    $e_{2(k+1)+1} e_{2l}=\frac{1}{k+1}(e_{2k+1} e_2)e_{2l}=
\frac{1}{k+1}\big(e_{2k+1} (e_2e_{2l})+e_{2k+1}(e_{2l}e_2)\big)=\frac{1+l}{k+1}e_{2k+1}e_{2l+2},$
\end{center}
but since    
$e_{2k+1}e_{2l+2}= C_{k+l+1}^{k} e_{2k+2l+3}$,   then 
\begin{center}$\frac{1+l}{k+1} C_{k+l+1}^{k} =\frac{(1+l)(k+l+1)!}{(k+1)k! (l+1)!} =\frac{(k+l+1)!}{(k+1)! l!}=C_{(k+1)+l}^{k+1},$\end{center}
which concludes the proof of equality (\ref{eq5}).  Similarly, we prove the equation 
\begin{equation}
\label{eq6} e_{2k+1}  e_{2l+1}=C_{k+l}^{l} e_{2k+2l+2}.
\end{equation}
 The equality (\ref{eq1}) leads to equation (\ref{eq6}) for $k=0$ and every $l$,  and  after supposing that equation (\ref{eq6}) holds for $k$ and every $l$,  we obtain that it also holds for $k+1$ and every $l$ on account of equality (\ref{eq3}):

\begin{center}
    $e_{2(k+1)+1} e_{2l+1}=\frac{1}{k+1}(e_{2k+1} e_2)e_{2l+1}=
\frac{1}{k+1} \big(e_{2k+1} (e_2e_{2l+1})+e_{2k+1}(e_{2l+1}e_2)\big)=\frac{1+l}{k+1}e_{2k+1}e_{2l+3},$
\end{center}

but since    $e_{2k+1}e_{2l+3}= C_{k+l+1}^{l+1} e_{2k+2l+4}$,   then 
\begin{center}$\frac{1+l}{k+1} C_{k+l+1}^{l+1} =\frac{(1+l)(k+l+1)!}{(k+1)k! (l+1)!} =\frac{(k+l+1)!}{(k+1)! l!}=C_{(k+1)+l}^{l},$\end{center}
which concludes the proof of equality (\ref{eq6}).  Finally,  we prove the last equality 
\begin{equation}
\label{eq7} e_{2k}  e_{2l}=C_{k+l-1}^{l} e_{2k+2l},
\quad k \geq 1,  \ l  \geq 1.
\end{equation}
 The equality (\ref{eq3}) leads to equation (\ref{eq7}) for $k=1$ and every $l$,  and  after supposing that equation (\ref{eq7}) holds for $k$ and every $l$,  we obtain that it also holds for $k+1$ and every $l$ on account of equality (\ref{eq3}):
\begin{center}$e_{2(k+1)} e_{2l}=\frac{1}{k}(e_{2k} e_2)e_{2l}=
\frac{1}{k}(e_{2k} \big(e_2e_{2l})+e_{2k}(e_{2l}e_2)\big)=\frac{1+l}{k}e_{2k}e_{2l+2}$
\end{center}
but as    $e_{2k}e_{2l+2}= C_{k+l}^{l+1} e_{2k+2l+2}$,   then
\begin{center}
$\frac{1+l}{k} C_{k+l}^{l+1} =\frac{(1+l)(k+l)!}{(k-1)!k (l+1)!} =\frac{(k+l)!}{k! l!}=C_{(k+1)+l-1}^{l},$\end{center}
which concludes the proof of equality (\ref{eq7}) and also of the statement of the Theorem.

\end{proof}

\section{Naturally graded filiform Zinbiel superalgebras}

\begin{lem} Let  ${\bf Z}$  be a complex naturally graded filiform Zinbiel superalgebra with ${\rm dim}({\bf Z}_{\bar{0}})=n$, $n\geq 5$  and ${\rm dim}({\bf Z}_{\bar{1}})=m$. Then, there are  a basis 
 $\{e_1, \dots,e_n\}$ for  ${\bf Z}_{\bar{0}}$  and a basis $\{f_1, \dots,f_m\}$ for  ${\bf Z}_{\bar{1}}$, 
 for which we have the following multiplication table
$$\begin{array}{ll}
e_i  e_j =C_{i+j-1}^j e_{i+j},  &    2\leq i+j \leq n-1, \\
e_1f_j=f_{j+1}, & 1\leq j \leq m-1.
\end{array}$$
 \end{lem}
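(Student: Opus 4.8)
The plan is to treat the even--even products and the action of the generator on the odd part separately, since the table in the statement consists of exactly these two families.

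First I would record that the even part is a naturally graded filiform Zinbiel \emph{algebra}. Indeed, the parity-preserving isomorphism $\mathbf{Z}\cong\mathrm{gr}(\mathbf{Z})$ witnessing that $\mathbf{Z}$ is naturally graded restricts to an algebra isomorphism $\mathbf{Z}_{\bar 0}\cong\bigoplus_i\mathfrak{z}^i_{\bar 0}=\mathrm{gr}(\mathbf{Z}_{\bar 0})$, so $\mathbf{Z}_{\bar 0}$ is naturally graded; by the Remark it is filiform, and $\dim\mathbf{Z}_{\bar 0}=n\geq 5$. The classification theorem for naturally graded filiform Zinbiel algebras then yields a basis $\{e_1,\dots,e_n\}$ of $\mathbf{Z}_{\bar 0}$ with $e_ie_j=C_{i+j-1}^{\,j}e_{i+j}$ for $2\leq i+j\leq n-1$, in which $e_1\in\mathbf{Z}_{\bar 0}\setminus\mathbf{Z}_{\bar 0}^2$ is a characteristic element of the even part: $L_{e_1}$ has Jordan type $(n-1,1)$, sending $e_1\mapsto e_2\mapsto\cdots\mapsto e_{n-1}\mapsto 0$ and $e_n\mapsto 0$.

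The delicate point, which I expect to be the main obstacle, is to arrange that this $e_1$ simultaneously governs the odd part. By the standing convention recorded after the definition of the characteristic sequence, there is a single element $x\in\mathbf{Z}_{\bar 0}\setminus\mathbf{Z}_{\bar 0}^2$ realizing both maxima, so filiformity gives $C_0(x)=(n-1,1)$ and $C_1(x)=(m)$; a priori a characteristic element of the even part need not achieve the full odd Jordan block, so I cannot simply reuse the $e_1$ of the previous step. Instead I would run the normal-form construction starting from $x$ itself: set $e_1:=x$, $e_{i+1}:=e_1e_i$ for $1\leq i\leq n-2$, and complete by the complementary generator $e_n$ with $e_1e_n=0$; the classification theorem then guarantees that after the usual rescaling of $e_n$ the products are exactly $e_ie_j=C_{i+j-1}^{\,j}e_{i+j}$. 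Equivalently, one exhibits an automorphism of $\mathbf{Z}_{\bar 0}$ carrying the normal-form generator to $x$, which exists because every characteristic element has nonzero $e_1$-component; this is the step where the filtered (graded) structure is essential. Either way we may henceforth assume $e_1=x$, so that $L_{e_1}$ acts on $\mathbf{Z}_{\bar 1}$ as a single Jordan block of size $m$.

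Finally the odd relations are pure linear algebra. Since $L_{e_1}\colon\mathbf{Z}_{\bar 1}\to\mathbf{Z}_{\bar 1}$ is nilpotent with a single Jordan block of size $m=\dim\mathbf{Z}_{\bar 1}$, it is cyclic; picking any $f_1\in\mathbf{Z}_{\bar 1}\setminus L_{e_1}(\mathbf{Z}_{\bar 1})$ and setting $f_{j+1}:=e_1f_j=L_{e_1}^{\,j}(f_1)$ produces a basis $\{f_1,\dots,f_m\}$ of $\mathbf{Z}_{\bar 1}$ with $e_1f_j=f_{j+1}$ for $1\leq j\leq m-1$ and $e_1f_m=0$. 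Combined with the first step this gives the whole displayed multiplication table. As indicated, the genuine difficulty is the reconciliation in the middle step: forcing the single vector $e_1$ to serve at once as the normal-form generator of the even part and as the characteristic element acting with a full Jordan block on the odd part, which is exactly where the joint use of filiformity and the natural grading is needed.
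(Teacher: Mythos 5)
Your proposal is correct and follows essentially the same route as the paper: restrict to the even part and invoke the classification of complex naturally graded filiform Zinbiel algebras, then take $e_1$ to be the (joint) characteristic element, so that $L_{e_1}$ restricted to ${\bf Z}_{\bar 1}$ is a single nilpotent Jordan block of size $m$ and an adapted cyclic basis gives $e_1f_j=f_{j+1}$. The only difference is that you make explicit the reconciliation step---arranging that the normal-form generator of ${\bf Z}_{\bar 0}$ coincides with the superalgebra's characteristic element $x$---which the paper compresses into the assertion ``since $e_1$ is the characteristic element above then there is no loss of generality.''
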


\begin{proof}
  Consider ${\bf Z}={\bf Z}_{\bar{0}}\oplus {\bf Z}_{\bar{1}}$ to be a naturally graded filiform Zinbiel superalgebra with ${\rm dim}({\bf Z}_{\bar{0}})=n$ and ${\rm dim}({\bf Z}_{\bar{1}})=m$. We set $\{e_1, \dots,e_n\}$  and $\{f_1, \dots,f_m\}$ as bases of ${\bf Z}_{\bar{0}}$ and ${\bf Z}_{\bar{1}}$ respectively. It derives from the definition that ${\bf Z}_{\bar{0}}$ is a naturally graded Zinbiel algebra and then, for $n\geq 5$ we have from \cite{classifications} completely determined its products
$ e_i  e_j =C_{i+j-1}^j e_{i+j},$   for $2\leq i+j \leq n-1.$
Since $e_1$ is the characteristic element above then there is no loss of generality in supposing  $e_1f_j=f_{j+1}, 1\leq j \leq m-1.$
\end{proof}

Let us use the following notion $\prod\limits_{k=0}^{-1}f(k):=1.$

\begin{thm}\label{general} 
Let  ${\bf Z}$  be a complex naturally graded filiform Zinbiel superalgebra with ${\rm dim}({\bf Z}_{\bar{0}})=n$, $(n\geq 5),$  and 
${\rm dim}({\bf Z}_{\bar{1}})=m,$ $(m>3)$.  
Then ${\bf Z}$ is isomorphic to one of the following superalgebras:

\begin{longtable}{rcl}
    
$
\mathfrak{nf}_1$&$:$&$\left\{\begin{array}{lll}
e_i  e_j =C_{i+j-1}^j e_{i+j},  & & \begin{array}{l} 2\leq i+j \leq n-1.\end{array} \\[1mm]


e_if_j=\frac{\prod\limits_{k=0}^{i-2}(j+k-1) }{(i-1)!}f_{j+i}, &

f_je_i=\frac{\prod\limits_{k=0}^{i-1}(j+k-2) }{i!}f_{j+i}, &
\begin{array}{l}
1\leq i\leq n-1.
\end{array}\\

e_nf_1= f_{2},&f_1e_n=-f_2.& \\
\end{array}\right.$\\

\\
$\mathfrak{nf}_2^\alpha$&$:$&$\left\{\begin{array}{lll}
e_i  e_j =C_{i+j-1}^j e_{i+j},  &   & \begin{array}{l} 2\leq i+j \leq n-1. \end{array}\\[1mm]
e_if_j=\frac{\prod\limits_{k=0}^{i-2}(\alpha+j+k) }{(i-1)!}f_{j+i}, & 
f_je_i=\frac{\prod\limits_{k=0}^{i-1}(\alpha+j+k-1) }{i!}f_{j+i},  &\begin{array}{l}
1\leq i\leq n-1.
\end{array}\\
\end{array}\right.$\\

\\
$
\mathfrak{nf}_3$&$:$&$\left\{
\begin{array}{lll}
e_i  e_j =C_{i+j-1}^j e_{i+j},  &   & \begin{array}{l}2\leq i+j \leq n-1. \end{array}\\[1mm]

e_if_j=\frac{\prod\limits_{k=0}^{i-2}(2-m+j+k)}{(i-1)!} f_{j+i}, & 
f_je_i=\frac{\prod\limits_{k=0}^{i-1}(1-m+j+k) }{i!}f_{j+i}, &  
\begin{array}{l}
1\leq i\leq n-1,
\end{array}\\[1mm]
e_nf_{m-1}=f_m.& &

\end{array}\right.
 $
 \end{longtable}
 
If $m\geq n-2$, the superalgebra ${\bf Z}$ is either isomorphic to  one of the previous superalgebras or isomorphic  to one of the next two:
\begin{longtable}{rcl}
$
\mathfrak{nf}_4$&$:$&$\left\{\begin{array}{lll}
e_i  e_j =C_{i+j-1}^j e_{i+j},  &   &  \begin{array}{l}2\leq i+j \leq n-1. \end{array}\\[1mm]


e_if_j=\frac{\prod\limits_{k=0}^{i-2}(n-3+j+k)}{(i-1)!} f_{j+i},&
f_je_i=\frac{\prod\limits_{k=0}^{i-1}(n-4+j+k)}{i!} f_{j+i}, & 
\begin{array}{l}
1\leq i\leq n-1.\end{array}\\[1mm]

f_1f_{n-2}= e_{n-1}.&&
\end{array}\right.$\\

\\
$
\mathfrak{nf}_5$&$:$&$
\left\{\begin{array}{lll}
e_i  e_j =C_{i+j-1}^j e_{i+j},  &  &   \begin{array}{l}2\leq i+j \leq n-1. \end{array}\\[1mm]

e_if_j=\frac{\prod\limits_{k=0}^{i-2}(3-n+j+k) }{(i-1)!}f_{j+i},& f_je_i=\frac{\prod\limits_{k=0}^{i-1}(2-n+j+k)}{i!} f_{j+i}, & \begin{array}{l}
1\leq i\leq n-1.\end{array}\\

e_nf_{n-2}=f_{n-1},&
f_1f_{n-2}= e_{n-1}.&
\end{array}\right.
$
\end{longtable}
\end{thm}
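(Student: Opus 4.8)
The plan is to start from the rigid skeleton provided by the Lemma and to determine all remaining products by repeated use of the Zinbiel superidentity and of its right-commutative consequence from the Remark. By the Lemma the even part $\mathbf{Z}_{\bar 0}$ is already the unique naturally graded filiform Zinbiel algebra and $e_1 f_j = f_{j+1}$ is fixed. Since the grading forces every product of an even and an odd vector to be odd, and every product of two odd vectors to be even, I would write
\[ e_i f_j = a_{ij} f_{i+j}, \qquad f_j e_i = b_{ij} f_{i+j}, \qquad f_i f_j = c_{ij} e_{i+j}, \]
with the convention $f_k = 0$ for $k > m$ and $e_k = 0$ for $k > n$. The one structural feature to keep in the foreground is that in the \emph{filiform} (not null-filiform) case $e_n$ sits in degree one of the natural grading, since it is not a product ($e_1 e_{n-1} = 0$); hence the products $e_n f_j$ and $f_j e_n$ are governed by separate relations and must be handled apart from the principal chain $e_1,\dots,e_{n-1}$.

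The first main step is to pin down $a_{ij}$ and $b_{ij}$. Writing $e_i = e_1 e_{i-1}$ for $2\le i\le n-1$ and expanding $(e_1 e_{i-1}) f_j$ by the Zinbiel superidentity gives $a_{ij} = a_{i-1,j} + b_{i-1,j}$; expanding $(f_j e_1) e_{i-1}$ and using $e_1 e_{i-1} + e_{i-1} e_1 = i\,e_i$ gives $i\,b_{ij} = b_{1j}\,b_{i-1,j+1}$; and expanding $(e_1 f_j) e_1$ gives $b_{1,j+1} = b_{1j} + 1$, so that $b_{1j} = \alpha + j - 1$ with a single free parameter $\alpha := b_{11}$. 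Solving these recursions yields exactly the closed forms $a_{ij} = \frac{1}{(i-1)!}\prod_{k=0}^{i-2}(\alpha+j+k)$ and $b_{ij} = \frac{1}{i!}\prod_{k=0}^{i-1}(\alpha+j+k-1)$ of the generic family $\mathfrak{nf}_2^\alpha$; this settles the whole principal chain in terms of $\alpha$ alone.

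The second, and harder, step is the analysis of the exceptional products $e_n f_j$, $f_j e_n$ and the odd-odd products $f_i f_j$, out of which the four remaining superalgebras and their special values of $\alpha$ emerge. For the $e_n$-products I would exploit $e_n e_1 = 0$: the right-commutative identity applied to $(e_n f_j)e_1$ yields $a$-type relations, while the Zinbiel identity applied to $(e_n e_1)f_j = e_n(e_1 f_j + f_j e_1)$ yields $(\alpha + j)\,(e_n f_{j+1}) = 0$. Comparing the two shows that $e_n f_j$ can survive only at the two ends of the index range, forcing $\alpha = -1$ when $e_n f_1 \neq 0$ (giving $\mathfrak{nf}_1$) and $\alpha = 2-m$ when $e_n f_{m-1}\neq 0$ (giving $\mathfrak{nf}_3$). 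For the odd-odd products I would first reduce every $c_{ij}$ with $i\ge 2$ to the primitive coefficients $c_{1j}$ through $(e_1 f_i) f_j = e_1(f_i f_j - f_j f_i)$, then use $(f_1 e_1) f_{j}$ together with the degeneration $e_{n-1} e_1 = 0$ at the top of the filiform chain to see that a surviving $f_1 f_{n-2} = e_{n-1}$ is compatible only with $\alpha = n-3$ (giving $\mathfrak{nf}_4$) and with $\alpha = 3-n$, the latter additionally forcing a nonzero $e_n f_{n-2}$ (giving $\mathfrak{nf}_5$). The hypothesis $m \ge n-2$ is exactly what guarantees that $f_{n-2}$ (and, for $\mathfrak{nf}_5$, $f_{n-1}$) exists.

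The main obstacle is precisely this exceptional bookkeeping: one must run through every triple involving $e_n$, two odd factors, or a boundary index sum $i+j \in \{n-1,n\}$ where the even structure constants degenerate, check both the Zinbiel and the right-commutative identities, and verify that the special values of $\alpha$ listed above are the \emph{only} ones admitting a nonzero exceptional product. The odd-odd case is the most delicate, since it genuinely couples the even and the odd parts and the remaining $c_{ij}$ are then determined, rather than free, so the terse tables must be read as generating data. To finish, I would normalize each surviving exceptional constant to $1$ (or $\pm 1$) by a diagonal rescaling of the $f_j$ and of $e_{n-1}$, confirm that the resulting tables are exactly $\mathfrak{nf}_1,\dots,\mathfrak{nf}_5$, and establish pairwise non-isomorphism by comparing the invariant $\alpha$ together with the presence and position of the surviving $e_n$- and odd-odd products.
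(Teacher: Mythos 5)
Your strategy is the same as the paper's: take the skeleton from the Lemma, set $f_1e_1=\alpha f_2$, obtain the closed formulas for $e_if_j$ and $f_je_i$ by induction (your recursions $a_{ij}=a_{i-1,j}+b_{i-1,j}$, $i\,b_{ij}=b_{1j}b_{i-1,j+1}$, $b_{1,j+1}=b_{1j}+1$ are correct and reproduce exactly the paper's products), and then analyse the exceptional products. Your treatment of $e_nf_j$ also matches the paper: the two families of relations kill $b_j$ (the coefficient of $e_nf_j$) for $2\le j\le m-2$, a surviving $b_1\neq0$ forces $\alpha=-1$ (giving $\mathfrak{nf}_1$), a surviving $b_{m-1}\neq0$ forces $\alpha=2-m$ (giving $\mathfrak{nf}_3$), and both zero gives the family $\mathfrak{nf}_2^{\alpha}$. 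One omission there: you never compute the products $f_je_n$, although $f_1e_n=-f_2$ is part of the table of $\mathfrak{nf}_1$ you must arrive at (the paper obtains $c_1=-b_1$ and $f_je_n=0$ for $j\ge2$ from $\mathfrak{sZ}\{f_1,e_n,e_1\}$ and $\mathfrak{sZ}\{f_2,e_n,e_1\}$).

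The genuine gap is in the odd-odd analysis that is supposed to produce $\mathfrak{nf}_4$ and $\mathfrak{nf}_5$. Every available identity yields a constraint of the form $(\text{affine-linear in }\alpha)\cdot h=0$ on the coefficient $h$ of $f_1f_{n-2}=h\,e_{n-1}$; concretely, the paper's induction propagates $(\alpha+k)h_{1,k+1}=0$ and terminates (because $e_{n-1}e_1=0$ breaks the killing mechanism) with the single binding constraint $(\alpha+n-3)h=0$. Hence $h\neq0$ determines $\alpha$ \emph{uniquely}, namely $\alpha=3-n$. Your assertion that a surviving $f_1f_{n-2}=e_{n-1}$ is compatible with the two values $\alpha=n-3$ and $\alpha=3-n$ cannot be derived from such constraints (a nonzero affine-linear polynomial has one root); in fact the table of $\mathfrak{nf}_4$ with parameter $n-3$ fails the superidentity (test $\mathfrak{sZ}\{f_1,e_1,f_2\}$ for $n=5$: the left side is $2f_2f_2=0$ while the right side is $4f_1f_3=4e_4$). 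The printed $n-3$ in $\mathfrak{nf}_4$ is a sign slip for $3-n$ in the paper itself, which your blind write-up inherited and then asserted rather than derived. Likewise, $\alpha=3-n$ does not \emph{force} $e_nf_{n-2}\neq0$: the vanishing or not of $b_{m-1}$ is an independent dichotomy, and it is precisely this dichotomy that separates $\mathfrak{nf}_4$ ($b_{m-1}=0$, $h\neq0$) from $\mathfrak{nf}_5$ ($b_{m-1}\neq0$, $h\neq0$); in the latter case the two requirements $\alpha=2-m$ and $\alpha=3-n$ force $m=n-1$, a restriction absent from your sketch. As written, your mechanism for obtaining the last two superalgebras does not work, even though the surrounding framework is the correct one.
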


\begin{proof}
We consider now, ${\bf Z}={\bf Z}_{\bar{0}}\oplus {\bf Z}_{\bar{1}}$   a complex naturally graded filiform Zinbiel superalgebra with ${\rm dim}({\bf Z}_{\bar{0}})=n$, $n\geq 5$  and 
${\rm dim}({\bf Z}_{\bar{1}})=m$ with $m\geq 3.$ 
Then, there exists a basis $\{e_1, \dots,e_n, f_1, \ldots ,f_m \}$ and we have one of the following possibilities depending on if $m<n-1$, $m=n-1$ or $m>n-1$.
\begin{longtable}{cccccccccccccc}
$\underbrace{ \langle e_1,e_n,f_1 \rangle}_{ {{\mathfrak z}^1} }$ & $\oplus$ & 
$\underbrace{\langle e_2, f_2\rangle}_{ { {\mathfrak z}^2 }}$ & 
$\oplus$ & $\ldots$ & $\oplus$ & 
$\underbrace{ \langle e_m , f_m \rangle}_{ { {\mathfrak z}^m }}$& $\oplus$ & 
$\underbrace{ \langle e_{m+1}\rangle}_{ { {\mathfrak z}^{m+1} }}$ & $\oplus$ & 
$\ldots$ & $\oplus$ & 
$\underbrace{ \langle e_{n-1}\rangle}_{ { {\mathfrak z}^{n-1} }}$ \\

$\underbrace{\langle e_1,e_n,f_1 \rangle}_{ { {\mathfrak z}^1 }}$ & $\oplus$ & $\underbrace{\langle e_2, f_2\rangle}_{ { {\mathfrak z}^2 }}$ & $\oplus$ & 
$\ldots$ & $\oplus$ & 
$\underbrace{ \langle e_{n-1} , f_{n-1}\rangle}_{ { {\mathfrak z}^{n-1} }}$ \\

$\underbrace{\langle e_1,e_n,f_1 \rangle}_{ {\mathfrak z}^1 }$ & $\oplus$ & $\underbrace{\langle e_2, f_2\rangle}_{ { {\mathfrak z}^2 }}$ & 
$\oplus$ & $\ldots$ & $\oplus$ & $\underbrace{ \langle e_{n-1} , f_{n-1}\rangle}_{ { {\mathfrak z}^{n-1} }}$& $\oplus$ & 
$\underbrace{ \langle f_{n} \rangle}_{ { {\mathfrak z}^n }}$ & 
$\oplus$ & $\ldots$ & $\oplus$ & 
$\underbrace{ \langle f_m\rangle}_{ { {\mathfrak z}^m }}$ \\
\end{longtable} 


We will study  these three cases together. We consider 
$f_1e_1=\alpha f_2$. By induction using the Zinbiel superidentity we obtain:
\begin{longtable}{lcllcll}

$e_if_j$&$=$&$\frac{\prod\limits_{k=0}^{i-2}(\alpha+j+k) }{(i-1)!}f_{j+i},$& 
$f_je_i$&$=$&$\frac{\prod\limits_{k=0}^{i-1}(\alpha+j+k-1) }{i!}f_{j+i},$ & 
$\begin{array}{l}
1\leq i\leq n-1,
\end{array}$\\[2mm]

$f_1 e_n$&$=$&$c_1f_2.$&
$e_nf_j$&$=$&$b_j f_{j+1},$& $\begin{array}{l}1\leq j\leq m-1.\end{array}$\\

\end{longtable}

For every three homogeneous elements $a,b,c \in {\bf Z}$, we define
\begin{center}
    $\mathfrak{sZ}\{a,b,c\}:=(ab)c-a\big(bc+(-1)^{|a||b|}cb\big).$
    \end{center}
Now, applying the  Zinbiel superidentity for basis elements gives the following relations.

\begin{longtable}{llllll}
$\Big\{\mathfrak{sZ}\{e_n,e_1,f_j\}=0\Big\}_{2\leq j\leq m-1}$& $\Rightarrow$ & $(\alpha+j-1)b_j=0$&\\

$\Big\{\mathfrak{sZ}\{e_n,f_j,e_1\}=0\Big\}_{1\leq j\leq m-2}$&$\Rightarrow$ &    $(\alpha+j)b_j=0$& $\Rightarrow \fbox{$b_j=0, \ 2\leq j\leq m-2$}$ \\

$\Big\{\mathfrak{sZ}\{f_1,e_n,e_1\}=0\Big\}$&$\Rightarrow$ & $(\alpha+1)c_1=0,$&\\
$\Big\{\mathfrak{sZ}\{f_2,e_n,e_1\}=0\Big\}$&$\Rightarrow$ & $(\alpha+2)(c_1+b_1)=0,$&$\Rightarrow \fbox{$c_1=-b_1$}$\\
\end{longtable}
thus, $f_1e_n=-b_1f_2$ and $f_je_n=0$ with $2\leq j\leq m-1.$
Then, we have the following products:
\begin{longtable}{lcllcll}
$e_i  e_j $&$=$&$C_{i+j-1}^j e_{i+j},$  && &&  $\begin{array}{l}2\leq i+j \leq n-1,\end{array}$ \\


$e_if_j$&$=$&$\frac{\prod\limits_{k=0}^{i-2}(\alpha+j+k)}{(i-1)!} f_{j+i},$ &
$f_je_i$&$=$&$\frac{\prod
\limits_{k=0}^{i-1}(\alpha+j+k-1)}{i!} f_{j+i},$ &  $\begin{array}{l}
1\leq i\leq n-1,\end{array}$\\

$e_nf_1$&$=$&$b_1 f_{2},$& $e_nf_{m-1}$&$=$&$b_{m-1}f_m,$&\\
$f_1e_n$&$=$&$-b_1f_2,$&$f_je_n$&$=$&$0,$&$\begin{array}{l}2\leq j\leq m-1,\end{array}$
\end{longtable}
with $(\alpha+1)b_1=(\alpha+m-2)b_{m-1}=0.$

Finally, only rest to compute the products $f_if_j=h_{ij}e_{i+j}.$ We compute them by induction:

\begin{itemize}
\item {\bf Step 1:} We have:
\begin{longtable}{lcl}
$f_1f_1$&$=$&$h_{11}e_2,$\\
$f_2f_1$&$=$&$(e_1f_1)f_1=e_1(f_1f_1)-e_1(f_1f_1)=0,$\\
$f_1f_2$&$=$&$h_{12}e_3.$
\end{longtable}

Moreover, we have 
\begin{longtable}{lclcl}
$\mathfrak{sZ}\{f_1,f_1,e_1\}=0$ & $\Rightarrow$&$2h_{11}=(a+1)h_{12},$&\\
$\mathfrak{sZ}\{f_1,e_1,f_1\}=0$&$\Rightarrow$&$(\alpha+1)h_{12}=0$&$\Rightarrow$ &$ h_{11}=(a+1)h_{12}=0.$\\
\end{longtable}

\item {\bf Step 2:} Now, we can write:
\begin{longtable}{lc rccclcl}
&&$f_1f_1$&$=$&$f_2, \quad \quad  \quad \quad \quad \quad f_2 f_1$&$=$&$0,$\\
&&$f_1f_2$&$=$&$h_{12}e_3,\quad \quad \quad \quad f_1f_3$&$=$&$h_{13}e_4,$\\
$f_2f_2$&$=$&$(e_1f_1)f_2$&$=$&$e_1(f_1f_2)-e_1(f_2f_1)$&$=$&$h_{12}(e_1e_3)$&$=$&$h_{12}e_4,$\\
$f_3f_1$&$=$&$(e_1f_2)f_1$&$=$&$e_1(f_2f_1)-e_1(f_1f_2)$&$=$&$-h_{12}(e_1e_3)$&$=$&$-h_{12}e_4,$
\end{longtable}
with $(\alpha+1)h_{12}=0.$
Applying the Zinbiel superidentity, we get:
\begin{longtable}{llll}
$\mathfrak{sZ}\{f_1,f_2,e_1\}=0$&$\Rightarrow$& $3h_{12}=(\alpha+2)h_{13},$\\

$\mathfrak{sZ}\{f_1,e_1,f_2\}=0$&$\Rightarrow$& $(\alpha+2)h_{13}=\alpha h_{12}.$
\end{longtable}

Therefore, joining the following equations:
$$\left.\begin{array}{rcl}
(\alpha+1)h_{12} & = & 0\\
3h_{12}& = &(\alpha+2)h_{13}\\
(\alpha+2)h_{13} & =& \alpha h_{12}\\
\end{array}\right \}\Rightarrow h_{12}=0,\ (\alpha+2)h_{13}=0.$$

\item {\bf Step 3:} We suppose that 
\begin{longtable}{rclclclcl}
$f_if_{k+1-i}$&$=$&$0,$&& $2\leq i\leq k,$\\
$f_1f_k$&$=$&$h_{1k}e_{k+1},$\\
$f_1f_{k+1}$&$=$&$h_{1\, k+1} e_{k+2},$\\

$f_{k+1}f_1$&$=$&$(e_1f_k)f_1$&$=$&$e_1(f_kf_1)-e_1(f_1f_k)=-h_{1k}(e_1e_{k+1})=-h_{1k}e_{k+2},$\\
$f_2f_k$&$=$&$(e_1f_1)f_k$&$=$&$e_1(f_1f_k)-e_1(f_kf_1)=h_{1k}(e_1e_{k+1})=h_{1k} e_{k+2},$\\
$f_kf_2$&$=$&$(e_1f_{k-1})f_2$&$=$&$e_1(f_{k-1}f_2)-e_1(f_2f_{k-1})=0,$\\
$f_if_{k+2-i}$&$=$&$(e_1f_{i-1})f_{k+2-i}$&$=$&$e_1(f_{i-1}f_{k+2-i})-e_1(f_{k+2-i}f_{i-1})=0,$\\
\multicolumn{7}{l}{(by induction hypothesis).}
\end{longtable}
with $(\alpha+k-1)h_{1k}=0.$

Now, we apply the Zinbiel superidentity obtaining:
\begin{longtable}{lllcl}
$\mathfrak{sZ}\{f_1,f_k,e_1\}=0$&$\Rightarrow$& $(k+1)h_{1k}$&$=$&$(\alpha+k)h_{1\, k+1},$\\
$\mathfrak{sZ}\{f_1,e_1,f_k\}=0$&$\Rightarrow$& $(\alpha+k)h_{1\, k+1}$&$=$&$a h_{1k}.$
\end{longtable}

Therefore, joining the following equations:
$$\left.\begin{array}{rcl}
(\alpha+k-1)h_{1k}&=&0\\
(k+1)h_{1k}&=&(\alpha+k)h_{1\, k+1}\\
(\alpha+k)h_{1\, k+1}&=&\alpha h_{1k}\\
\end{array}\right \}\Rightarrow h_{1k}=0,\ (\alpha+k)h_{1\, k+1}=0.$$
\end{itemize}

This process is finite, then we get  
$f_1f_m=h_{1m}e_{m+1}$ (the rest $f_{i}f_j=0$) with  $(\alpha+m-1)h_{1m}=0$ if $m<n-2$ and $f_1f_{n-2}=h_{1\, n-2}e_{n-1}$ (the rest $f_{i}f_j=0$) with  $(\alpha+n-3)h_{1\, n-2}=0$ if $m\geq n-2.$ 

In the case $m<n-2$: $\mathfrak{sZ}\{f_1,f_m,e_1\}=0$ gives    $h_{1m}=0.$

Thus, the Zinbiel superalgebra has the following multiplication table:
\begin{longtable}{lcllcll}
$e_i  e_j $&$=$&$C_{i+j-1}^j e_{i+j},$  &&& &    $\begin{array}{l}2\leq i+j \leq n-1,\end{array}$ \\[1mm]


$e_if_j$&$=$&$\frac{\prod\limits_{k=0}^{i-2}(\alpha+j+k)}{(i-1)!} f_{j+i},$ & 
$f_je_i$&$=$&$\frac{\prod\limits_{k=0}^{i-1}(\alpha+j+k-1)}{i!} f_{j+i},$ & 
$\begin{array}{l}
1\leq i\leq n-1,\end{array}$\\

$e_nf_1$&$=$&$b_1 f_{2},$&
$e_nf_{m-1}$&$=$&$b_{m-1}f_m,$& $\begin{array}{l} m\geq n-2\end{array}$\\
$f_1e_n$&$=$&$-b_1f_2,$ &$ f_1f_{n-2}$&$=$&$h e_{n-1},$
\end{longtable}
with $(\alpha+1)b_1=(\alpha+m-2)b_{m-1}=0$ and $(\alpha+n-3)h=0$ if $m\geq n-2.$ 
Now, we consider $m>3$ and $m<n-2.$ We can distinguish the following cases:

\medskip 

\begin{enumerate}

    \item 
[{\bf Case 1:}] $b_1\neq 0.$ In this case, we have $\alpha=-1$ and $b_{m-1}=0.$ If $m\geq n-2,$ we also have that $h=0.$ Then we obtain the Zinbiel superalgebra $\mathfrak{nf}_1.$

\item 
[{\bf Case 2:}] $b_1= 0$ and $b_{m-1}=0.$ In this case, we have a family of Zinbiel superalgebras $\mathfrak{nf}_2^\alpha.$

\item 
[{\bf Case 3:}] $b_1= 0$ and $b_{m-1}\neq 0.$ Then, $\alpha=2-m.$ We have the superalgebra $\mathfrak{nf}_3.$

\end{enumerate}

\medskip

Now, we consider $m>3$ and $m\geq n-2.$ Analyzing the above cases, we have a new superalgebra in  {\bf Case 2} (when $\alpha=n-3$ and $h\neq 0$, we can consider $h=1$), $\mathfrak{nf}_4,$ and another in {\bf Case 3} (when  $m=n-1,$ $\alpha=3-n,$ and $h\neq 0$, we can consider $h=1$), $\mathfrak{nf}_5$. 
\end{proof}

Next, we study the particular case $m=3.$ We consider now, ${\bf Z}$   a complex naturally graded filiform Zinbiel superalgebra with ${\rm dim}({\bf Z}_{\bar{0}})=n$, $n\geq 5$ and ${\rm dim}({\bf Z}_{\bar{1}})=3$. 
Then, there exists a basis $\{e_1, \dots,e_n, f_1, f_2,f_3 \}$ and we have the gradation:
$$\begin{array}{cccccccccccc}
\underbrace{\langle e_1,e_n,f_1\rangle } & \oplus & \underbrace{\langle e_2, f_2\rangle } & \oplus &\underbrace{ \langle e_3,f_3\rangle } & \oplus & \underbrace{ \langle e_4\rangle}& \oplus & \underbrace{ \langle e_5\rangle} & \oplus &  \dots  \\
{\mathfrak z}^1 &  &  {\mathfrak z}^2 & & {\mathfrak z}^3 & & {\mathfrak z}^4 & & {\mathfrak z}^5 
\end{array}$$

\begin{thm} Let  ${\bf Z}$  be a complex naturally graded filiform Zinbiel superalgebra with ${\rm dim}({\bf Z}_{\bar{0}})=n$, $(n\geq 5),$  and ${\rm dim}({\bf Z}_{\bar{1}})=3.$  Then ${\bf Z}$ is isomorphic to  one of the following superalgebras:
\begin{itemize}
\item If $n>5,$ then ${\bf Z}$ is  isomorphic either to $\mathfrak{nf}_1,$   $\mathfrak{nf}_2^\alpha,$   $\mathfrak{nf}_3,$ or 
 $${\mathfrak a}_1:\left\{\begin{array}{lll}
e_i  e_j =C_{i+j-1}^j e_{i+j},  &    2\leq i+j \leq n-1, &\\
e_1 f_1=f_{2},& e_1 f_2=f_{3},&f_1 e_1=-f_{2},\\[1mm]
e_nf_1= f_{2},&
e_nf_{2}=f_3,&
f_1e_n=-f_2.
\end{array}\right.$$

\item If $n=5, $ then ${\bf Z}$ is isomorphic either to $\mathfrak{nf}_1,$  $\mathfrak{nf}_2^{\alpha\neq -2},$   $\mathfrak{nf}_3,$ $\mathfrak{a}_1$ or 
$$
\mathfrak{a}_2:\left\{\begin{array}{llll}
e_i  e_j =C_{i+j-1}^j e_{i+j},  &   2\leq i+j \leq n-1, &\\
f_1 e_1=-2f_{2},&f_2 e_1=-f_{3},& f_1e_2= f_{3}, \\
e_1 f_1=f_{2},& e_1 f_2=f_{3}, & 
e_2f_1=- f_{2}, &f_1f_{n-2}= e_{n-1}. 
\end{array}\right.$$
\end{itemize}
\end{thm}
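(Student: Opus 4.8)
The plan is to follow verbatim the strategy of the proof of Theorem~\ref{general}, specializing every inductive computation to $m=3$ and paying close attention to the degeneracies that arise when the ``middle'' index range collapses. First I would invoke the Lemma to fix the basis $\{e_1,\dots,e_n,f_1,f_2,f_3\}$ together with the products $e_ie_j=C_{i+j-1}^{j}e_{i+j}$ on the even part and $e_1f_j=f_{j+1}$; then set $f_1e_1=\alpha f_2$ and run the same induction on the Zinbiel superidentity to obtain the closed formulas for $e_if_j$ and $f_je_i$ in terms of $\alpha$, along with the boundary coefficients $e_nf_1=b_1f_2$, $e_nf_2=b_2f_3$ and $f_1e_n=c_1f_2$. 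The gradation displayed just before the statement pins down which of these products are admissible.

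The key structural difference from Theorem~\ref{general} is that for $m=3$ the range $2\le j\le m-2$ occurring in the relation $\mathfrak{sZ}\{e_n,f_j,e_1\}=0$ is \emph{empty}, so the argument that forced the interior coefficients to vanish produces nothing, and both $b_1$ and $b_2=b_{m-1}$ survive as genuine boundary parameters. Applying $\mathfrak{sZ}\{e_n,f_1,e_1\}=0$ and $\mathfrak{sZ}\{e_n,e_1,f_2\}=0$ I would extract $(\alpha+1)b_1=0$ and $(\alpha+1)b_2=0$, while $\mathfrak{sZ}\{f_2,e_n,e_1\}=0$ gives $c_1=-b_1$; the crucial point is that for $m=3$ one has $\alpha+m-2=\alpha+1$, so the single condition $\alpha=-1$ now releases \emph{both} $b_1$ and $b_2$ at once, whereas for $m>3$ the value $\alpha=-1$ still killed $b_{m-1}$. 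This is precisely the mechanism producing the extra algebra $\mathfrak{a}_1$: in Case~1 ($b_1\neq0$, hence $\alpha=-1$) one may now keep $b_2\neq0$ as well, and normalizing both to $1$ yields $\mathfrak{a}_1$, while Cases~2 and~3 reproduce $\mathfrak{nf}_2^{\alpha}$ and $\mathfrak{nf}_3$ as before.

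Next I would compute the odd--odd products $f_if_j=h_{ij}e_{i+j}$ by the same finite induction, which forces all of them to vanish except possibly the top one. Here the split between $n>5$ and $n=5$ enters through the inequality $m\lessgtr n-2$: when $n>5$ we have $m=3<n-2$, the surviving product $f_1f_m$ is annihilated by $\mathfrak{sZ}\{f_1,f_m,e_1\}=0$, and the classification closes with $\mathfrak{nf}_1,\mathfrak{nf}_2^{\alpha},\mathfrak{nf}_3,\mathfrak{a}_1$. When $n=5$ we instead have $m=3=n-2$, so the product $f_1f_{n-2}=f_1f_3=h\,e_{4}$ survives subject to $(\alpha+n-3)h=(\alpha+2)h=0$; thus $h$ can be nonzero only when $\alpha=-2$, and the resulting coupling of the odd part back into the even part is what produces $\mathfrak{a}_2$ after normalizing $h=1$.

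The main obstacle I anticipate is the careful bookkeeping of redundancies at $n=5$. Once the new even-valued product $f_1f_3=e_4$ is allowed, one must determine which members of the previous families become isomorphic to $\mathfrak{a}_2$ and which survive as independent representatives. In particular the exclusion $\mathfrak{nf}_2^{\alpha\neq-2}$ in the statement reflects that the special member $\mathfrak{nf}_2^{-2}$ is absorbed at $\alpha=-2$, and one has to exhibit an explicit base change---a rescaling of the odd basis $\{f_1,f_2,f_3\}$ compatible with the grading, together with the remaining freedom in $e_n$---that identifies the redundant cases and simultaneously checks that $\mathfrak{a}_1$ and $\mathfrak{a}_2$ are pairwise non-isomorphic to the $\mathfrak{nf}_i$. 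The inductive formulas and the Zinbiel-superidentity relations themselves are entirely routine; the genuinely delicate part is this final isomorphism analysis disentangling the boundary coincidences and justifying the precise list.
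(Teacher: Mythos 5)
Your overall route is exactly the paper's: it too specializes the proof of Theorem \ref{general} to $m=3$, records the same parametrized table with $\alpha$, $b_1$, $b_2$ (and $h$ when $n=5$) subject to $(\alpha+1)b_1=(\alpha+1)b_2=0$ and $(\alpha+2)h=0$, and splits into the cases $b_1\neq0\neq b_2$ (giving $\mathfrak{a}_1$), $b_1\neq 0=b_2$ (giving $\mathfrak{nf}_1$), and $b_1=0$ (giving $\mathfrak{nf}_3$ or $\mathfrak{nf}_2^{\alpha}$). Your identification of the two mechanisms --- for $m=3$ both boundary constraints degenerate to $(\alpha+1)$, so $\alpha=-1$ releases $b_1$ and $b_2$ simultaneously, producing $\mathfrak{a}_1$; and for $n=5$ the product $f_1f_3=h\,e_4$ survives with $(\alpha+2)h=0$, producing $\mathfrak{a}_2$ --- is precisely what drives the paper's argument, and you spell it out more explicitly than the paper does (its $n=5$ discussion is reduced to ``a similar study allows getting the superalgebras of the statement'').

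There is, however, a genuine gap at the one point you yourself flag as delicate: the exclusion $\mathfrak{nf}_2^{\alpha\neq-2}$ for $n=5$. You propose to justify it by exhibiting a base change under which $\mathfrak{nf}_2^{-2}$ is ``absorbed'' into $\mathfrak{a}_2$. No such base change can exist: an isomorphism of superalgebras is an even map, hence it carries ${\bf Z}_{\bar 1}{\bf Z}_{\bar 1}$ onto the corresponding subspace of the target, so $\dim\left({\bf Z}_{\bar 1}{\bf Z}_{\bar 1}\right)$ is an isomorphism invariant; it equals $0$ for $\mathfrak{nf}_2^{-2}$ (all $f_if_j=0$) but $1$ for $\mathfrak{a}_2$ (where $f_1f_{3}=e_{4}$). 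Nor can $\mathfrak{nf}_2^{-2}$ be identified with the other members of the list: the scalar $\alpha$ comparing $yx$ with $xy$ for $x\in {\bf Z}_{\bar 0}\setminus {\bf Z}_{\bar 0}^2$ and $y\in {\bf Z}_{\bar 1}\setminus {\bf Z}_{\bar 0}{\bf Z}_{\bar 1}$, read modulo higher filtration terms, is also invariant, and it equals $-1$ for $\mathfrak{nf}_1$, $\mathfrak{nf}_3$, $\mathfrak{a}_1$ but $-2$ for $\mathfrak{nf}_2^{-2}$. So the step ``identify the redundant case $\alpha=-2$'' cannot be carried out as you describe, and your plan does not yield the stated $n=5$ list; be aware that the paper's own proof is silent on this point as well, so the justification for discarding $\mathfrak{nf}_2^{-2}$ (with $h=0$) is not something you can reconstruct by an isomorphism argument of the kind you sketch.
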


\begin{proof}
Similar to general case (see the proof of Theorem \ref{general}) we get  that ${\bf Z}$ is isomorphic to 
\begin{longtable}{lcllcllcllcl}
$e_i  e_j$&$ =$&$C_{i+j-1}^j e_{i+j},$  &   \multicolumn{3}{l}{$2\leq i+j \leq n-1,$} &\\

$f_1 e_1$&$=$&$af_{2},$&
$f_1e_2$&$=$&$\frac{\alpha(\alpha+1)}{2} f_{3},$ &  
$f_2 e_1$&$=$&$(\alpha+1)f_{3},$& 
$e_nf_1$&$=$&$b_1 f_{2},$\\

$e_1 f_1$&$=$&$f_2,$&
$e_2f_1$&$=$&$(\alpha+1) f_{2},$ &
$e_1 f_2$&$=$&$f_3,$ &
 $e_nf_{2}$&$=$&$b_{2}f_3,$\\

$f_1e_n$&$=$&$-b_1f_2,$&
\multicolumn{5}{l}{$(f_1f_{3}=h e_{4},\ \mbox{if } \ n=5.),$}
\end{longtable}
with $(\alpha+1)b_1=(\alpha+1)b_{2}=0$ and $(\alpha+2)h=0$ if $n=5.$

First, we consider $n>5.$ We can distinguish the following cases:
\begin{itemize}
\item $b_1\neq 0$ and $b_2\neq 0.$ In this case, $\alpha=-1$ and by a change of basis we can consider $b_1=b_2=1$ obtaining the superalgebra $\mathfrak{a}_1.$
\item $b_1\neq 0$ and $b_2=0.$ This case is similar to general case and we obtain $\mathfrak{nf}_1.$
\item $b_1=0.$ This case  is similar to general case and we obtain $\mathfrak{nf}_3$ (for $b_2\neq 0$) and $\mathfrak{nf}_2^\alpha$ (for $b_2=0$)
\end{itemize}

Now, we consider $n=5$, we have a new product and a new restriction. A similar study  allows getting  the superalgebras of the statement. 
\end{proof}

\section{Classification of low-dimensional complex Zinbiel superalgebras}


\begin{lem}
\label{lem(n-1,1)}
Given a $n$-dimensional Zinbiel superalgebra ${\bf Z}$ of type $(n-1, 1)$,  i.e. ${\rm dim}({\bf Z}_{\bar 0})=n-1$ and ${\rm dim}({\bf Z}_{\bar 1})=1$,   then ${\bf Z}_{\bar 0}{\bf Z}_{\bar 1}={\bf Z}_{\bar 1}{\bf Z}_{\bar 0}=\left\{0\right\}$. Moreover, ${\bf Z}_{\bar 1}{\bf Z}_{\bar 1}$ is a subspace of ${\rm Ann}_{\rm L}\,({\bf Z}_{\bar 0})$. 
\end{lem}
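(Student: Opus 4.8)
The plan is to exploit that $\dim({\bf Z}_{\bar 1})=1$, so that the mixed products are governed by a pair of scalar-valued linear functionals on ${\bf Z}_{\bar 0}$, and then to pin these functionals down using the Zinbiel superidentity together with the nilpotency of the even part. Throughout I would fix a nonzero $f\in{\bf Z}_{\bar 1}$, so that ${\bf Z}_{\bar 1}=\langle f\rangle$. By the grading, $xf,fx\in{\bf Z}_{\bar 1}$ for every $x\in{\bf Z}_{\bar 0}$, so there are linear functionals $r,l\colon{\bf Z}_{\bar 0}\to\mathbb{F}$ with $xf=r(x)f$ and $fx=l(x)f$, while $ff\in{\bf Z}_{\bar 0}$. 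The whole statement is then equivalent to proving $r=0$, $l=0$, and $(ff)y=0$ for all $y\in{\bf Z}_{\bar 0}$.

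To show $r=0$ (that is, ${\bf Z}_{\bar 0}{\bf Z}_{\bar 1}=0$), I would expand the Zinbiel superidentity on the triple $(x,f,z)$ with $x,z\in{\bf Z}_{\bar 0}$: since $|f|\,|z|=0$ it reads $(xf)z=x(fz+zf)$, and substituting the functionals turns this into $r(x)l(z)=r(x)\big(l(z)+r(z)\big)$, hence $r(x)r(z)=0$ for all $x,z$. Taking $x=z$ gives $r(x)^2=0$, so $r(x)=0$ because the scalars lie in a field. Symmetrically, to analyse $l$ I would expand the superidentity on $(f,y,z)$ with $y,z$ even, which yields the quadratic relation $l(y)l(z)=l(yz)+l(zy)$.

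The hard part will be deducing $l=0$ from this relation, and this is the only step that genuinely uses nilpotency: note that ${\bf Z}_{\bar 0}$ is an ordinary Zinbiel algebra (the superidentity on even elements is the Zinbiel identity), hence nilpotent by \cite{zinb5}. Writing $A={\bf Z}_{\bar 0}$ with nilpotency index $N$ and invoking the standard filtration $A^iA^j\subseteq A^{i+j}$ for Zinbiel powers, I would argue by downward induction on $k$ that $l|_{A^k}=0$: the case $k=N$ is trivial since $A^N=0$, and assuming $l|_{A^{k+1}}=0$, for $z,w\in A^k$ both $zw$ and $wz$ lie in $A^{2k}\subseteq A^{k+1}$ (as $2k\ge k+1$ for $k\ge 1$ and the powers descend), so $l(z)l(w)=l(zw)+l(wz)=0$; taking $w=z$ gives $l(z)^2=0$, whence $l(z)=0$. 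At $k=1$ this delivers $l=0$, i.e. ${\bf Z}_{\bar 1}{\bf Z}_{\bar 0}=0$.

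Finally, for the left-annihilator claim I would feed the conclusions $r=l=0$ back into the superidentity on $(f,f,z)$ with $z$ even: this gives $(ff)z=f\big(fz+zf\big)=f\big(l(z)f+r(z)f\big)=0$ for every $z\in{\bf Z}_{\bar 0}$, which is precisely ${\bf Z}_{\bar 1}{\bf Z}_{\bar 1}\subseteq\mathrm{Ann}_{\rm L}({\bf Z}_{\bar 0})$, completing the argument. The only nontrivial ingredient is the nilpotency input in the third paragraph; the remaining steps are direct applications of the defining identity to well-chosen homogeneous triples.
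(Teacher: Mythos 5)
Your proof is correct and follows essentially the same route as the paper's: the superidentity on the triple $(x,f,x)$ kills ${\bf Z}_{\bar 0}{\bf Z}_{\bar 1}$, a downward induction along the powers of the nilpotent even part (via $(fx)x=2f(xx)$) kills ${\bf Z}_{\bar 1}{\bf Z}_{\bar 0}$, and the triple $(f,f,x)$ then gives ${\bf Z}_{\bar 1}{\bf Z}_{\bar 1}\subseteq{\rm Ann}_{\rm L}({\bf Z}_{\bar 0})$. The only cosmetic differences are your use of the functionals $r,l$ instead of structure constants and your appeal to the filtration $A^iA^j\subseteq A^{i+j}$, which is stronger than needed, since the trivial containment $A^kA^k\subseteq A\,A^k=A^{k+1}$ already suffices for the induction step.
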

\begin{proof}
Let $e_1, \ldots, e_{n-1}$ be a base of ${\bf Z}_{\bar 0}$ and $f_1$ a base of ${\bf Z}_{\bar 1}$.
Denote by $e_{i}f_{1} = a_{i}f_{1}$ and $f_{1}e_i = b_{i}f_{1}$, for $a_{i}, b_{i} \in \mathbb{C}$ and  $i=1,\ldots,n-1$. On the one hand, for $i=1,\ldots,n-1$ we have
$$(e_{i}f_{1}) e_{i} = e_{i}(f_{1}e_{i}) + e_{i}(e_{i} f_{1}); \quad a_{i}b_{i}f_{1} = a_{i}b_{i}f_{1} + a_{i}^2 f_{1}.$$
Hence, we have  $a_{i}=0$ and ${\bf Z}_{\bar 0}{\bf Z}_{\bar 1} = \left\{0\right\}$. 

On the other hand,  since ${\bf Z}_{\bar 0}$ is a Zinbiel algebra, it is nilpotent. Suppose ${\bf Z}_{\bar 0}^{s}=0$ and proceed by induction. For $x\in {\bf Z}_{\bar 0}^{s-1}$, we have 
\begin{center}$(f_{1}x)x = f_{1}(xx+xx)=2f_{1}(xx),$\end{center}
 and since $xx\in {\bf Z}_{\bar 0}^{s}=\left\{0\right\}$, we obtain $(f_{1}x)x=0$, which implies $f_{1}x=0$, and we may write $f_{1}{\bf Z}_{\bar 0}^{s-1}=\left\{0\right\}$. Now, suppose  $f_{1}{\bf Z}_{\bar 0}^{s-k+1} =\left\{0\right\}$, then for $x\in {\bf Z}_{\bar 0}^{s-k}$, we have $(f_{1}x)x=2f_{1}(xx) = 0$, because $xx\in {\bf Z}_{\bar 0}^{s-k+1}$. Therefore, $f_{1}x=0$ and we have $f_{1}{\bf Z}_{\bar 0}^{s-k} = \left\{0\right\}$. For $k=s-1$, we conclude that ${\bf Z}_{\bar 1}{\bf Z}_{\bar 0}^{1} = {\bf Z}_{\bar 1}{\bf Z}_{\bar 0} = \left\{0\right\}$.

\bigskip 

Further, we have
$(f_{1}f_{1})x = f_{1}(f_{1}x) + f_{1}(xf_{1}) = 0$ for $x\in {\bf Z}_{\bar 0}$.  Therefore, $(f_{1} f_{1}) \in {\rm Ann}_{\rm  L}\,({\bf Z}_{\bar 0})$ and ${\bf Z}_{\bar 1}{\bf Z}_{\bar 1}$ is a subspace of ${\rm Ann}_{\rm  L}\,({\bf Z}_{\bar 0})$ being  ${\rm Ann}_{\rm  L}\,({\bf Z}_{\bar 0}) = \left\{ x\in A: x{\bf Z}_{\bar 0}=0\right\}$.
\end{proof}

The converse is a straightforward verification.

\begin{rem} \label{remconverse}
Given a $(n-1)$-dimensional Zinbiel algebra ${\bf Z}_{\bar 0}$. If we construct a superalgebra  ${\bf Z}$  such that ${\bf Z}_{\bar 0}{\bf Z}_{\bar 1}={\bf Z}_{\bar 1}{\bf Z}_{\bar 0}=\left\{0\right\}$  and such that ${\bf Z}_{\bar 1}{\bf Z}_{\bar 1}$ is a subspace of ${\rm Ann}_{\rm  L}\,({\bf Z}_{\bar 0})$. Then, ${\bf Z}$ is a Zinbiel superalgebra of type $(n-1, 1)$.
\end{rem}

\begin{rem}
\label{remzero}
Given a non-zero $n$-dimensional Zinbiel superalgebra ${\bf Z}$ of type $(n-1, 1)$ such that ${\bf Z}_{\bar 0}$ is the zero algebra. Then it is isomorphic to ${\bf Z}_{n,0}: f_{1}f_{1} = e_1$, simply choosing $\phi: A \to A_{n,0}$ such that $\phi(f_{1}f_{1})= e_1$. The classification of the $2$-dimensional Zinbiel superalgebras follows by this statement.

%
%
\end{rem}

We recover the classification of the $n$-dimensional Zinbiel algebras {\cite{classifications, afk22}}, as it will be required.

\begin{thm}
Given an $n$-dimensional, for $n\leq3$, non-trivial Zinbiel algebra, then it is isomorphic to only one of the following

\begin{itemize}
    \item If $n=2$, then it is
    \begin{enumerate}[$\bullet$]
        \item 
     $\mathfrak{Z}_{2, 1}: e_{1}e_{1} = e_2.$
    \end{enumerate}
    
    \item If $n=3$, then it is isomorphic to $\mathfrak{Z}_{3, 1} = \mathfrak{Z}_{2, 1} \oplus \mathbb{C}$ or to
     \begin{enumerate}[$\bullet$]
    \item  $\mathfrak{Z}_{3, 2}: e_{1}e_{1} = e_2, e_1 e_2 = \frac{1}{2}e_3, e_2 e_1 = e_3.$
    
    \item  $\mathfrak{Z}_{3, 3}: e_1 e_2 = e_3, e_2 e_1 = - e_3.$
    
     \item  $\mathfrak{Z}_{3, 4}: e_1 e_1 = e_3, e_1 e_2 = e_3,  e_2e_2 = \beta e_3.$

    \item  $\mathfrak{Z}_{3, 5}: e_1 e_1 = e_3, e_1 e_2 = e_3, e_2 e_1 = e_3.$
     \end{enumerate}

    %
    %
\end{itemize}

\end{thm}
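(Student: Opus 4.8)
The plan is to exploit nilpotency and stratify by $\dim {\bf Z}^2$. Since every finite-dimensional Zinbiel algebra over a field is nilpotent \cite{zinb5}, for $n\le 3$ the nilpotency index is at most $n+1\le 4$, so ${\bf Z}\supsetneq {\bf Z}^2\supsetneq {\bf Z}^3\supseteq {\bf Z}^4=0$ and in particular $\dim {\bf Z}^2\le n-1$. For $n=2$ a non-trivial algebra forces $\dim{\bf Z}^2=1$ and ${\bf Z}^3=0$, so the dimension condition $\dim{\bf Z}^i=n+1-i$ holds and ${\bf Z}$ is null-filiform; the null-filiform classification recalled above then yields $e_1e_1=e_2$ after rescaling, i.e. $\mathfrak{Z}_{2,1}$. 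This disposes of $n=2$.

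For $n=3$ I would split into the two possibilities $\dim{\bf Z}^2\in\{1,2\}$. If $\dim{\bf Z}^2=2$ the algebra is null-filiform, and the recalled theorem gives $e_ie_j=C_{i+j-1}^j e_{i+j}$, which after a rescaling of $e_3$ is exactly $\mathfrak{Z}_{3,2}$. The substantive case is $\dim{\bf Z}^2=1$, where ${\bf Z}^3=0$. Here I would first observe, directly from the Zinbiel identity, that $(ab)c=a(bc+cb)\in {\bf Z}\,{\bf Z}^2={\bf Z}^3=0$, so ${\bf Z}^2{\bf Z}\subseteq{\bf Z}^3=0$; together with ${\bf Z}\,{\bf Z}^2={\bf Z}^3=0$ this shows the line ${\bf Z}^2$ annihilates ${\bf Z}$ on both sides. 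Fixing a generator $z$ of ${\bf Z}^2$ and a complement $W=\langle e_1,e_2\rangle$, the multiplication is then encoded by a single bilinear form $B=(B_{ij})$ on $W$ valued in $\Co z$, via $e_ie_j=B_{ij}z$, with $B\ne 0$. The key simplification is that the Zinbiel identity is then automatically satisfied: for all $x,y,w\in W$ both $(xy)w$ and $x(yw+wy)$ land in ${\bf Z}\,{\bf Z}^2={\bf Z}^3=0$. Thus the problem collapses to classifying nonzero $2\times 2$ matrices $B$ up to congruence and rescaling, $B\sim \lambda\, g^{\top} B\, g$ with $g\in GL_2(\Co)$ and $\lambda\in\Co^\times$.

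To finish I would decompose $B=S+A$ into its symmetric and antisymmetric parts, which transform independently: $S\mapsto \lambda g^{\top}Sg$ and $A\mapsto \lambda\det(g)A$, where $A=aJ$ with $J=\left(\begin{smallmatrix}0&1\\-1&0\end{smallmatrix}\right)$. Over $\Co$ the congruence class of $S$ is its rank, so I would enumerate by $\mathrm{rk}(S)\in\{0,1,2\}$: the case $\mathrm{rk}(S)=0$ forces $B=aJ$ and gives $\mathfrak{Z}_{3,3}$; the case $\mathrm{rk}(S)=1$ with $A=0$ gives $\mathfrak{Z}_{3,1}=\mathfrak{Z}_{2,1}\oplus\Co$, while $\mathrm{rk}(S)=1$ with $A\ne 0$ forms a single class (all nonzero $a$ being equivalent after a diagonal $g$), realized by $\mathfrak{Z}_{3,4}^{1/4}$; and $\mathrm{rk}(S)=2$ splits according to the scaling-invariant ratio $\det B/\det S$, with $A=0$ (ratio $1$) giving $\mathfrak{Z}_{3,5}$ and $A\ne0$ giving the one-parameter family $\mathfrak{Z}_{3,4}^{\beta}$, $\beta\ne \tfrac14$.

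The main obstacle is precisely this last branch. Because $\det B$ and $\det S$ scale by the same factor $\lambda^2(\det g)^2$, their ratio $\det B/\det S$ (equivalently $a^2$) is invariant; one must check it is a genuine modulus, so that the $\mathfrak{Z}_{3,4}^{\beta}$ are pairwise non-isomorphic, and that the discrete invariants ``$A=0$?'' and $\mathrm{rk}(S)$ separate $\mathfrak{Z}_{3,5}$, $\mathfrak{Z}_{3,1}$ and the family from one another. Tracking these invariants is what guarantees the displayed list is both complete and irredundant; everything else reduces to the routine normal-form reductions of $B$ indicated above.
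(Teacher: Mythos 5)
Your proposal is correct, but it cannot coincide with ``the paper's proof'' because the paper does not prove this theorem at all: the classification is simply recalled from the cited literature \cite{classifications,afk22}, so any argument here is a different route by default. Your route is a clean, self-contained one: nilpotency \cite{zinb5} plus strict descent of the powers reduces $n=3$ to the cases $\dim{\bf Z}^2\in\{1,2\}$; when $\dim{\bf Z}^2=2$ the algebra is one-generated, hence null-filiform, giving $\mathfrak{Z}_{3,2}$; and when $\dim{\bf Z}^2=1$ the Zinbiel identity forces ${\bf Z}^2{\bf Z}={\bf Z}{\bf Z}^2=0$, after which the identity becomes vacuous and the problem collapses to classifying a nonzero bilinear form $B$ on $\mathbb{C}^2$ up to $B\mapsto\lambda g^{\top}Bg$. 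Your invariants --- $\mathrm{rk}(S)$, whether $A=0$ (both preserved since $S\mapsto\lambda g^{\top}Sg$ and $A\mapsto\lambda\det(g)A$), and the ratio $\det B/\det S$ when $\det S\neq0$ --- do separate the orbits, and the dictionary with the list checks out: for $\mathfrak{Z}_{3,4}^{\beta}$ one has $\det S=\beta-\tfrac14$ and $\det B/\det S=\beta/(\beta-\tfrac14)$, a bijection from $\mathbb{C}\setminus\{\tfrac14\}$ onto $\mathbb{C}\setminus\{1\}$. Three steps you leave implicit should be written out, though all are routine: (i) $\dim{\bf Z}^2=2$ really forces $\dim{\bf Z}^3=1$ (if ${\bf Z}^3=0$ then ${\bf Z}^2$ annihilates on both sides and ${\bf Z}^2$ would be spanned by the single product $e_1e_1$, a contradiction; alternatively, $\dim{\bf Z}/{\bf Z}^2=1$ gives one-generated, and the paper recalls that one-generated is equivalent to null-filiform); (ii) an isomorphism need not preserve your chosen complement $W$, but components along $z$ contribute nothing to products since $z$ annihilates ${\bf Z}$, so isomorphism classes are exactly the $(g,\lambda)$-orbits of $B$; (iii) completeness of the ratio in the branch $\mathrm{rk}(S)=2$, $A\neq0$: normalize $S$ to the identity, so $B$ becomes $\mathrm{id}+aJ$ with $a^2=\det B/\det S-1$ determined by the orbit, the residual stabilizer only flipping the sign of $a$. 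Compared with the paper's citation, your argument buys a short conceptual proof in which completeness and irredundancy of the list emerge simultaneously from the congruence classification of $2\times2$ matrices; what the cited structure-constant computations buy is a method that extends to dimensions $4$ and $5$, where no such collapse to a single bilinear form is available.
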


In our classification, we will not consider non-proper superalgebras. So type $(n, 0)$, which corresponds to Zinbiel algebras, and type $(0, n)$ (zero algebra) are omitted. Also, we omit the superalgebras with $\mathfrak{Z}_{0}\mathfrak{Z}_{1} = \mathfrak{Z}_{1}\mathfrak{Z}_{0} = \mathfrak{Z}_{1}^2 = 0$, as they are split algebras.

\subsection{3-dimensional Zinbiel superalgebras}

\subsubsection{ (1, 2) superalgebras}

Let $\{ e_1, f_1, f_2\}$ be a basis of a superalgebra of ${\bf Z}={\bf Z}_{\bar{0}}\oplus {\bf Z}_{\bar{1}}$. Since ${\bf Z}_{\bar{0}}$ is the trivial one dimensional algebra, we have the following multiplication table for ${\bf Z}$: 
$$e_1f_i = a_{i}^{1}f_1 + a_{i}^{2}f_2, f_ie_1 = b_{i}^{1}f_1 + b_{i}^{2}f_2, f_if_j = c_{ij}e_1,$$
where $1\leq i,j \leq 2$. We find the equations on variables the structural constants studying case by case:

\begin{longtable}{lclcl}
    
$\mathfrak{sZ}\{e_1,e_1,f_1\}$&$=$&$0$ &$\Rightarrow $ & $(a_{1}^{1})^2+a_{1}^{1} b_{1}^{1}+a_{1}^{2} a_{2}^{1}+a_{2}^{1} b_{1}^{2}=0$ and $a_{1}^{1} a_{1}^{2}+a_{1}^{2} a_{2}^{2}+a_{1}^{2} b_{1}^{1}+a_{2}^{2} b_{1}^{2}=0$\\
    
$\mathfrak{sZ}\{e_1,e_1,f_2\}$&$=$&$0$ & $\Rightarrow $ & $a_{1}^{1} a_{2}^{1}+a_{1}^{1} b_{2}^{1}+a_{2}^{1} a_{2}^{2}+a_{2}^{1} b_{2}^{2} = 0 $ and $a_{1}^{2} a_{2}^{1}+a_{1}^{2} b_{2}^{1}+(a_{2}^{2})^2+a_{2}^{2} b_{2}^{2}=0$\\
    
$\mathfrak{sZ}\{e_1,f_1,e_1\}$&$=$&$0$ & $\Rightarrow $ & $(a_{1}^{1})^2+a_{1}^{2} a_{2}^{1}-a_{1}^{2} b_{2}^{1}+a_{2}^{1} b_{1}^{2}=0$ \\
&&&& and $a_{1}^{1} a_{1}^{2}-a_{1}^{1} b_{1}^{2}+a_{1}^{2} a_{2}^{2}+a_{1}^{2} b_{1}^{1}-a_{1}^{2} b_{2}^{2}+a_{2}^{2} b_{1}^{2}=0$\\
    
$\mathfrak{sZ}\{e_1,f_2,e_1\}$&$=$&$0$ & $\Rightarrow $ & $a_{1}^{1} a_{2}^{1}+a_{1}^{1} b_{2}^{1}+a_{2}^{1} a_{2}^{2}-a_{2}^{1} b_{1}^{1}+a_{2}^{1} b_{2}^{2}-a_{2}^{2} b_{2}^{1}= 0 $ \\&&&& and $a_{1}^{2} a_{2}^{1}+a_{1}^{2} b_{2}^{1}-a_{2}^{1} b_{1}^{2}+(a_{2}^{2})^2 = 0$\\
    
$\mathfrak{sZ}\{e_1,f_1,f_1\}$&$=$&$0$ & $\Rightarrow $ &$a_{1}^{1} c_{11}+a_{1}^{2} c_{21}=0$\\
    
$\mathfrak{sZ}\{e_1,f_1,f_2\}$&$=$&$0$ & $\Rightarrow $ & $a_{1}^{1} c_{12}+a_{1}^{2} c_{22}=0$\\
    
$\mathfrak{sZ}\{e_1,f_2,f_1\}$&$=$&$0$& $\Rightarrow $ &$a_{2}^{1} c_{11}+a_{2}^{2} c_{21}=0$\\
    
$\mathfrak{sZ}\{e_1,f_2,f_2\}$&$=$&$0$ & $\Rightarrow $& $a_{2}^{1} c_{12}+a_{2}^{2} c_{22}=0$\\
    
$\mathfrak{sZ}\{f_1,e_1,e_1\}$&$=$&$0$ & $\Rightarrow $ & $(b_{1}^{1})^2+b_{1}^{2} b_{2}^{1}=0$ and $b_{1}^{1} b_{1}^{2}+b_{1}^{2} b_{2}^{2}=0$\\
    
$\mathfrak{sZ}\{f_2,e_1,e_1\}$&$=$&$0$ & $\Rightarrow $ & $b_{1}^{1} b_{2}^{1}+b_{2}^{1} b_{2}^{2}=0$ and $b_{1}^{2} b_{2}^{1}+(b_{2}^{2})^2=0$\\
    
$\mathfrak{sZ}\{f_1,e_1,f_1\}$&$=$&$0$ & $\Rightarrow $ & $a_{1}^{1} c_{11}+a_{1}^{2} c_{12}+b_{1}^{2} c_{12}-b_{1}^{2} c_{21}=0$\\
    
$\mathfrak{sZ}\{f_1,e_1,f_2\}$&$=$&$0$ & $\Rightarrow $ & $a_{2}^{1} c_{11}+a_{2}^{2} c_{12}-b_{1}^{1} c_{12}-b_{1}^{2} c_{22}+b_{2}^{1} c_{11}+b_{2}^{2} c_{12}=0$\\
    
$\mathfrak{sZ}\{f_2,e_1,f_1\}$&$=$&$0$ & $\Rightarrow $& $a_{1}^{1} c_{21}+a_{1}^{2} c_{22}+b_{1}^{1} c_{21}+b_{1}^{2} c_{22}-b_{2}^{1} c_{11}-b_{2}^{2} c_{21}=0$\\
    
$\mathfrak{sZ}\{f_2,e_1,f_2\}$&$=$&$0$ & $\Rightarrow $ & $a_{2}^{1} c_{21}+a_{2}^{2} c_{22}-b_{2}^{1} c_{12}+b_{2}^{1} c_{21}=0$\\
    
$\mathfrak{sZ}\{f_1,f_1,e_1\}$&$=$&$0$ & $\Rightarrow $ & $a_{1}^{1} c_{11}+a_{1}^{2} c_{12}+b_{1}^{1} c_{11}+b_{1}^{2} c_{12}=0$\\
    
$\mathfrak{sZ}\{f_1,f_2,e_1\}$&$=$&$0$ & $\Rightarrow $ & $a_{2}^{1} c_{11}+a_{2}^{2} c_{12}+b_{2}^{1} c_{11}+b_{2}^{2} c_{12}=0$\\
    
$\mathfrak{sZ}\{f_2,f_1,e_1\}$&$=$&$0$ & $\Rightarrow $ & $a_{1}^{1} c_{21}+a_{1}^{2} c_{22}+b_{1}^{1} c_{21}+b_{1}^{2} c_{22}=0$\\
    
$\mathfrak{sZ}\{f_2,f_2,e_1\}$&$=$&$0$ & $\Rightarrow $ & $a_{2}^{1} c_{21}+a_{2}^{2} c_{22}+b_{2}^{1} c_{21}+b_{2}^{2} c_{22}=0$\\

$\mathfrak{sZ}\{f_1,f_1,f_1\}$&$=$&$0$ & $\Rightarrow $ & $a_{1}^{1} c_{11}=0$ and $a_{1}^{2} c_{11}=0$\\
    
 $\mathfrak{sZ}\{f_1,f_1,f_2\}$&$=$&$0$ & $\Rightarrow $ & $a_{2}^{1} c_{11}-b_{1}^{1} c_{12}+b_{1}^{1} c_{21} = 0 $ and $a_{2}^{2} c_{11}-b_{1}^{2} c_{12}+b_{1}^{2} c_{21}=0$\\
    
  $\mathfrak{sZ}\{f_1,f_2,f_1\}$&$=$&$0$ & $\Rightarrow $ & $a_{1}^{1} c_{12}+b_{1}^{1} c_{12}-b_{1}^{1} c_{21}=0$ and $a_{1}^{2} c_{12}+b_{1}^{2} c_{12}-b_{1}^{2} c_{21}= 0$\\
    
$\mathfrak{sZ}\{f_1,f_2,f_2\}$&$=$&$0$ & $\Rightarrow $ & $a_{2}^{1} c_{12} = 0 $ and $a_{2}^{2} c_{12}=0$\\
    
$\mathfrak{sZ}\{f_2,f_1,f_1\}$&$=$&$0$ & $\Rightarrow $ & $a_{1}^{1} c_{21} = 0$ and $a_{1}^{2} c_{21}=0$\\
    
$\mathfrak{sZ}\{f_2,f_1,f_2\}$&$=$&$0$ & $\Rightarrow $ & $a_{2}^{1} c_{21}-b_{2}^{1} c_{12}+b_{2}^{1} c_{21} = 0 $ and $a_{2}^{2} c_{21}-b_{2}^{2} c_{12}+b_{2}^{2} c_{21}=0$\\
    
 $\mathfrak{sZ}\{f_2,f_2,f_1\}$&$=$&$0$ & $\Rightarrow $ & $a_{1}^{1} c_{22}+b_{2}^{1} c_{12}-b_{2}^{1} c_{21} = 0 $ and $a_{1}^{2} c_{22}+b_{2}^{2} c_{12}-b_{2}^{2} c_{21}=0$\\
    
 $\mathfrak{sZ}\{f_2,f_2,f_2\}$&$=$&$0$ & $\Rightarrow $ & $a_{2}^{1} c_{22}=0$ and $a_{2}^{2} c_{22}=0$.
\end{longtable}

By solving the system of equations, we have one of the following possibilities for ${\bf Z}$: 

\begin{enumerate}[(a)]
    \item 
  $ 
    \begin{tabular}{lcllcllcllcl}
    $f_1 f_1$&$ =$&$ \lambda_{11}e_1, 
    $&$f_1 f_2 $&$=$&$\lambda_{12}e_1,$ &
    $f_2 f_1 $&$=$&$\lambda_{21}e_1,$&
    $f_2 f_2$&$ =$&$\lambda_{22}e_1.$
    \end{tabular}
       $

\item $ 
    \begin{tabular}{lcllcllcllcl}
    $f_1e_1 $&$=$&$ \mu f_1 -\frac{\lambda_{11}}{\lambda_{12}}\mu f_2 , $  &
    $f_2e_1 $&$=$&$ \frac{\lambda_{12}}{\lambda_{11}}\mu f_1 - \mu f_2,$&
    $ f_1 f_1 $&$= $&$\lambda_{11}e_1, $ \\
    $f_1 f_2$&$ =$&$\lambda_{12}e_1,$&
    $ f_2 f_1$&$=$&$ \lambda_{12}e_1,$&
    $ f_2 f_2$&$ = $&$\frac{\lambda_{12}^2}{\lambda_{11}}e_1.$
    \end{tabular}
     $  

\item $ 
    \begin{tabular}{lcllcllcllcl}
    $ e_1 f_1 $&$=$&$\mu f_{1} - \frac{\mu^2}{\mu'} f_{2},$ &
    $e_1 f_2 $&$= $&$\mu' f_{1} - \mu f_{2}.$
    \end{tabular}
      $

\item $ 
    \begin{tabular}{lcllcllcllcl}
    $e_1 f_1 $&$= $&$\mu f_1 - \frac{\mu \nu}{\nu'}f_2,$&\\
    $ e_1 f_2 $&$=$&$ \frac{\mu \nu'}{\nu}f_1 - \mu f_2,$ &
    $f_1 e_1 $&$=$&$ \nu f_1 - \frac{\nu^2}{\nu'}f_2,$&$ f_2 e_1$&$ = $&$\nu' f_1 - \nu f_2.$\\
    \end{tabular}$  

\item   \begin{tabular}{lcllcllcllcl}$f_1e_1 $&$=$&$ \mu f_2, $&$f_1 f_1 $&$=$&$ \mu' e_1.$\end{tabular} 
    
\item   \begin{tabular}{lcllcllcllcl}$f_2e_1 $&$=$&$ \mu f_1,$&$ f_2 f_2 $&$=$&$ \mu'e_1.$\end{tabular}
    
\item   \begin{tabular}{lcllcllcllcl}$e_1f_1 $&$= $&$\mu f_2,$&$ f_1e_1 $&$=$&$ \mu'f_2.$\end{tabular}

\item   \begin{tabular}{lcllcllcllcl}$e_1f_2 $&$= $&$\mu f_1,$&$ f_2e_1 $&$=$&$ \mu'f_1.$\end{tabular}
    
\end{enumerate}

\begin{lem}
 The isomorphism clases of the eight cases above can be reduced to the study of the cases (a), (e) and (g).
\begin{enumerate}
    \item The superalgebras (e) and (f) are isomorphic. 
    
    \item The superalgebras (g) and (h) are isomorphic. 
    
    \item The superalgebras (b) are isomorphic to (e) for $\mu'=\frac{\lambda_{12}^2}{\lambda_{11}}$.
    
    \item The superalgebras (c) are isomorphic to (g) for $\mu'=0$.
    
    \item The superalgebras (d) are isomorphic to (g) for $\mu'=\nu$.
\end{enumerate}
\end{lem}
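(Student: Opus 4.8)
The plan is to exhibit, in each of the five cases, an explicit even (grade--preserving) linear isomorphism. Since a morphism of superalgebras must preserve the $\mathbb{Z}_2$--grading and $\dim {\bf Z}_{\bar 0}=1$, any such isomorphism is determined by a nonzero scalar $\xi$ with $\phi(e_1)=\xi e_1$ together with an invertible matrix acting on the odd part $\langle f_1,f_2\rangle$. For the two ``swap'' statements $(1)$ and $(2)$ I would simply take the transposition $\phi(e_1)=e_1$, $\phi(f_1)=f_2$, $\phi(f_2)=f_1$: substituting into the multiplication tables of (e) and (f) (respectively (g) and (h)) and comparing products shows at once that $\phi$ is an isomorphism, the roles of the two odd generators being interchanged.

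For the reductions $(3)$--$(5)$ the structure of a $(1,2)$--superalgebra with trivial even part is encoded by three pieces of data on $\langle f_1,f_2\rangle$: the left multiplication $L_{e_1}$, the right multiplication $R_{e_1}$, and the symmetric bilinear form $(f_i,f_j)\mapsto c_{ij}e_1$ recording $f_if_j$. The key observation I would make is that in each of the families (b), (c), (d) the relevant multiplication operators have both vanishing trace and vanishing determinant, hence are rank--one and nilpotent; likewise the form in (b) is rank one. A direct computation then shows that the image, the kernel, and (where present) the radical of the form all coincide: for (b) and (c) this common line is $\langle \lambda_{12}f_1-\lambda_{11}f_2\rangle$ and $\langle \mu'f_1-\mu f_2\rangle$ respectively, and for (d) both $L_{e_1}$ and $R_{e_1}$ share the line $\langle \nu'f_1-\nu f_2\rangle$.

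Having located this distinguished line, I would choose a graded basis adapted to it, taking $g_2$ to be a spanning vector of the line and $g_1=f_2$ a complementary odd vector. Because all the structure degenerates along $\langle g_2\rangle$, a single such change of basis simultaneously brings every product into the target normal form. For (b) one finds $g_1g_1=f_2f_2=\frac{\lambda_{12}^2}{\lambda_{11}}e_1$ and $g_1e_1=\frac{\mu}{\lambda_{11}}g_2$, with all remaining products zero, which is exactly (e) with $\mu'=\frac{\lambda_{12}^2}{\lambda_{11}}$; for (c) one gets $e_1g_1=g_2$ and $g_1e_1=0$, i.e.\ (g) with second parameter $0$; and for (d) the same recipe gives $e_1g_1=\frac{\mu}{\nu}g_2$ and $g_1e_1=g_2$, after which rescaling $g_2$ by $\frac{1}{\nu}$ yields $e_1g_1=\mu g_2$, $g_1e_1=\nu g_2$, that is (g) with $\mu'=\nu$.

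The main obstacle I anticipate is the second step rather than the final substitution: one must verify that kernel, image and radical genuinely collapse to one and the same line in each degenerate family, for it is exactly this coincidence that permits a single graded basis change to normalise $L_{e_1}$, $R_{e_1}$ and the form all at once. Pinning down the scaling of $g_2$ so that the parameters emerge precisely as $\frac{\lambda_{12}^2}{\lambda_{11}}$, $0$ and $\nu$ is then a short bookkeeping exercise, and the homomorphism property reduces in every case to a finite check on the basis products.
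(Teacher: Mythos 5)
Your proposal is correct: the swap $f_1\leftrightarrow f_2$ does identify (e) with (f) and (g) with (h), and I have checked that your adapted bases do the job in the three remaining cases --- for (b), with $g_1=f_2$, $g_2=\lambda_{12}f_1-\lambda_{11}f_2$ one gets $g_1g_1=\frac{\lambda_{12}^2}{\lambda_{11}}e_1$, $g_1e_1=\frac{\mu}{\lambda_{11}}g_2$ and all other products zero; for (c), $e_1g_1=g_2$ with everything else zero; for (d), $e_1g_1=\frac{\mu}{\nu}g_2$, $g_1e_1=g_2$, which after rescaling $g_2$ is (g) with $\mu'=\nu$. Since the lemma only pins down the second parameter of the target family, the harmless rescalings of the distinguished vector (which only rescale $\mu$) are legitimate. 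The paper's proof has the same logical skeleton --- exhibit an explicit even isomorphism and verify it on basis products --- but it merely states three unmotivated maps (e.g.\ for statement (3) it uses $f_1\mapsto\frac{\lambda_{11}}{\lambda_{12}}f_1$, $f_2\mapsto f_1+f_2$, a different isomorphism from yours), whereas you derive your maps structurally: the relevant multiplication operators are trace-zero and determinant-zero, hence rank-one nilpotent, and their kernels, images and the radical of the odd form all collapse to a single line, which you take as the last basis vector. This buys an explanation of why such normalising maps must exist, at the cost of a small caveat: the claim ``kernel $=$ image'' requires the operator to be nonzero (e.g.\ $\mu\neq0$ in family (b)); when $\mu=0$ the operator vanishes, the distinguished line is still the radical of the form (or any line through which nothing factors), and your final basis computation goes through verbatim, so nothing breaks, but the coincidence-of-lines observation should strictly be stated only in the nondegenerate case.
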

\begin{proof}

The statements (1) and (2) are trivial. The statements (3), (4) and (5) can be shown, respectively, using the following maps.
\begin{longtable}{lcllcllcl}
$\phi_{1}(e_1)$&$= $&$e_1,$&$ \phi_{1}(e_2)$&$=$&$ \frac{\lambda_{11}}{\lambda_{12}}e_2, $&$\phi_{1}(e_3)$&$=$&$ e_2 + e_3,$\\
$\phi_{2}(e_1)$&$=$&$ e_1,$&$ \phi_{2}(e_2)$&$=$&$ \frac{\mu}{\mu'}e_2 + \frac{\mu}{\mu'} e_3,$&$ \phi_{2}(e_3)$&$=$&$ e_2,$\\
$\phi_{3}(e_1)$&$=$&$ e_1, $&$\phi_{3}(e_2)$&$=$&$ e_2, $&$\phi_{3}(e_3)$&$= $&$\frac{\nu'}{\nu}e_2 - \frac{\nu'}{\nu}e_3.$
\end{longtable}

\end{proof}

Next, we study the remaining cases.

{\bf Case (a)} is equivalent to one of the following, depending on the parameter.
\begin{itemize}
    \item If $\lambda_{11}\neq0$ and $\lambda_{12}\neq\lambda_{21}$. Now, choose \begin{center}
        $\phi(e_1)=\frac{\lambda_{11}}{(\lambda_{12}-\lambda_{21})^2} e_1$, $\phi(e_2)=\frac{\lambda_{11}}{\lambda_{21}-\lambda_{12}}e_2$ and $\phi(e_3)=  \frac{\lambda_{12}}{\lambda_{21}-\lambda_{12}}e_2 + e_3$,
    \end{center} then
    $${\bf z}_{3, 1}^{\alpha}: 
    f_1 f_1 = e_1,  \quad   f_2 f_1 = e_1,  \quad f_2 f_2 =\alpha e_1.
   $$
    
    \item If $\lambda_{11}\neq0$ and $\lambda_{12}=\lambda_{21}$. Then, by choosing the map \begin{center} $\phi(e_1)= \lambda_{11}^{-1}e_1,$ $\phi(e_2)= e_3$, $\phi(e_3)= -\frac{\sqrt{\lambda_{11}\lambda_{22}-\lambda_{12}^2}}{\lambda_{11}}e_2 + \lambda_{12}\lambda_{11}^{-1}e_3$,\end{center} we obtain
    $${\bf z}_{3, 2}:f_1 f_1 =e_1,\quad  f_2 f_2 =e_1.$$
Note that, if $\lambda_{22}={\lambda_{12}^2}{\lambda_{11}^{-1}}$, then this is equivalent to some case in (e).
    
    \item If $\lambda_{11}=0$, $\lambda_{12}\neq\lambda_{21}$ and ($\lambda_{12}\neq -\lambda_{21}$ or $\lambda_{22}\neq0$). Choose the map \begin{center}
    $\phi(e_1)=\frac{\lambda_{22}}{(\lambda_{12}-\lambda_{21})^2}e_1$, $\phi(e_2)=\frac{\lambda_{21}}{\lambda_{12}-\lambda_{21}}e_2 + e_3$ and $\phi(e_3)=\frac{\lambda_{22}}{\lambda_{12}-\lambda_{21}}$,
    \end{center} we obtain ${\bf z}_{3, 1}^{\alpha}$.

    \item If $\lambda_{11}=0$ and $\lambda_{12}=\lambda_{21}\neq0$, then we have ${\bf z}_{3, 2}$.
    
    \item If $\lambda_{11}=\lambda_{12}=\lambda_{21}=0$, then we obtain
    $${\bf z}_{3, 3}:f_1 f_1 =e_1,$$
    choosing $\phi(e_1)= \lambda_{22}^{-1}e_1$, $\phi(e_2)=e_3$ and $\phi(e_3)=e_2$.
    
    \item If $\lambda_{11}=\lambda_{22}=0$ and $\lambda_{12}=-\lambda_{21}$. Choose the map 
    \begin{center} $\phi(e_1)=\lambda_{12}^{-1}e_1$, $\phi(e_2)=e_2$ and $\phi(e_3)=e_3$ \end{center}  to obtain
    $${\bf z}_{3, 4}:f_1 f_2 = e_1, \quad f_2 f_1 =-e_1.$$
    
\end{itemize}

{\bf Case (e)} is equivalent to one of the following, depending on the parameters.
\begin{itemize}
    \item If $\mu\neq0$ and $\mu'\neq0$, then the map given by \begin{center}$\phi(e_1)=\mu'^{-1}e_1$, $\phi(e_2)=e_2$ and $\phi(e_3)=\mu^{-1}\mu'^{-1}e_3$\end{center}  shows that it is isomorphic to
    $${\bf z}_{3, 5}:f_1e_1 = f_2,\quad f_1 f_1 = e_1.$$
    
    \item If $\mu=0$ and $\mu'\neq0$, then, choosing $\phi(e_1)=\mu'^{-1}e_1$, $\phi(e_2)=e_2$ and $\phi(e_3)=e_3$, we have  ${\bf z}_{3, 3}.$

    \item If $\mu\neq0$ and $\mu'=0$, choose $\phi(e_1)=e_1$, $\phi(e_2)=e_2$ and $\phi(e_3)=\mu^{-1}e_3$ to obtain
    $${\bf z}_{3, 6}:f_1e_1 = f_2.$$
\end{itemize}

{\bf Case (g)} is equivalent to one of the following, depending on the parameters.
\begin{itemize}
    \item If $\mu'\neq0$, then, with $\phi$ such that $\phi(e_1)=e_1$, $\phi(e_2)=e_2$ and $\phi(e_3)=\mu'^{-1}e_3$, we have
    $${\bf z}_{3, 7}:e_1f_1 = \alpha f_2, f_1e_1 = f_2.$$
    
    \item If $\mu'=0$, choosing $\phi(e_1)=e_1$, $\phi(e_2)=e_2$ and $\phi(e_3)=\mu^{-1}e_3$ then we obtain
    $${\bf z}_{3, 8}:e_1f_1 = f_2.$$
    
\end{itemize}

\subsubsection{ (2, 1) superalgebras}

\begin{itemize}
    \item Even part $\mathfrak{Z}_{2, 0}$. Then, by Remark \ref{remzero}, we have ${\bf z}_{3, 0}$.
    
    \item Even part $\mathfrak{Z}_{2, 1}$. By Lemma \ref{lem(n-1,1)} and Remark \ref{remconverse}, we have that every superalgebra constructed on $\mathfrak{Z}_{2, 1}$ is of the form
    $$e_{1}e_{1} = e_2, \quad
    f_{1}f_{1} =  \lambda_{2} e_2. $$

    Choose the linear map $\phi$ such that $\phi(e_1)=\lambda_{2}^{-\frac{1}{2}} e_1,\phi(e_2)=\lambda_{2}^{-1}e_2,\phi(e_3)=e_3,$
    to obtain the superalgebra    
    $${\bf z}_{3, 9}:    e_{1}e_{1} = e_2,   f_{1}f_{1} = e_2. $$

\end{itemize}

Summing up, we have the classification of the $3$-dimensional Zinbiel superalgebras.

\begin{thm}\label{clasi3}

Given a $3$-dimensional complex non-split Zinbiel superalgebra ${\bf Z}$, then it is isomorphic to a $3$-dimensional Zinbiel algebra or to only one of the following.

\begin{longtable}{lclcllcllcl}

 ${\bf z}_{3, 1}^{\alpha}$&$:$&$ f_1 f_1 $&$ =$&$e_1$ &$f_2 f_1 $&$=$&$ e_1$ &    $f_2 f_2$&$ =$&$\alpha e_1$\\

${\bf z}_{3, 2}$&$:$&$f_1 f_1 $&$=$&$e_1$&$ f_2 f_2$&$ =$&$e_1$\\

${\bf z}_{3, 3}$&$:$&$f_1 f_1 $&$=$&$e_1$\\

${\bf z}_{3, 4}$&$:$&$f_1 f_2 $&$=$&$ e_1$  &    $f_2 f_1 $&$=$&$-e_1$\\

${\bf z}_{3, 5}$&$:$&$f_1e_1 $&$=$&$ f_2$&$ f_1 f_1 $&$=$&$ e_1$\\

${\bf z}_{3, 6}$&$:$&$f_1e_1 $&$=$&$ f_2$\\

${\bf z}_{3, 7}$&$:$&$  e_1f_1 $&$=$&$ \alpha f_2$  & $f_1e_1 $&$=$&$ f_2$\\

 ${\bf z}_{3, 8}$&$:$&$e_1f_1 $&$=$&$ f_2$\\

${\bf z}_{3, 9}$&$:$&$e_{1}e_{1} $&$=$&$ e_2$ &    $f_{1}f_{1} $&$=$&$ e_2.$
    \end{longtable}

\end{thm}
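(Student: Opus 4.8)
The plan is to assemble the classification from the two exhaustive super-type analyses carried out above, handling the types $(2,1)$ and $(1,2)$ separately and then checking that the resulting list is irredundant. Since the conventions fixed above discard the non-proper cases $(3,0)$ and $(0,3)$ together with the split superalgebras, these are the only two super-types that can occur in dimension $3$, so it suffices to treat each in turn.

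For type $(2,1)$ I would invoke Lemma \ref{lem(n-1,1)}, which forces $\mathbf{Z}_{\bar 0}\mathbf{Z}_{\bar 1}=\mathbf{Z}_{\bar 1}\mathbf{Z}_{\bar 0}=\{0\}$ and confines $\mathbf{Z}_{\bar 1}\mathbf{Z}_{\bar 1}$ to $\mathrm{Ann}_{\mathrm L}(\mathbf{Z}_{\bar 0})$. Thus a $(2,1)$ superalgebra is completely determined by its even part together with the single scalar governing $f_1f_1$. The even part is a $2$-dimensional Zinbiel algebra, hence, by the recovered classification, either the zero algebra $\mathfrak{Z}_{2,0}$ or $\mathfrak{Z}_{2,1}$. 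In the first case Remark \ref{remzero} yields $\mathbf{z}_{3,0}$; in the second, Remark \ref{remconverse} shows every admissible extension has the shape $e_1e_1=e_2$, $f_1f_1=\lambda_2 e_2$, and rescaling $e_1$ normalises $\lambda_2$ to $1$, giving $\mathbf{z}_{3,9}$.

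The bulk of the work is type $(1,2)$, where the even part is the trivial $1$-dimensional algebra. Here I would write the generic table with structure constants $a_i^j$, $b_i^j$, $c_{ij}$, impose $\mathfrak{sZ}\{\cdot,\cdot,\cdot\}=0$ on the basis triples to obtain the listed polynomial system, and solve it to isolate the eight parametric families (a)--(h). The reduction lemma above then collapses families (b), (c), (d), (f), (h) onto (a), (e), (g) through the displayed maps $\phi_1,\phi_2,\phi_3$, so only three families remain. For each of (a), (e), (g) I would split into the indicated parameter regimes and apply the explicit isomorphisms $\phi$ recorded above to reach the canonical representatives $\mathbf{z}_{3,1}^{\alpha},\dots,\mathbf{z}_{3,8}$.

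The main obstacle is twofold: first, solving the large coupled polynomial system for $(1,2)$ cleanly enough that the eight families emerge with neither omissions nor hidden overlaps; and second, proving that the final list is \emph{irredundant}. For the latter I would argue pairwise non-isomorphism by comparing discrete invariants --- the dimension of $\mathbf{Z}^2$, the ranks of the left and right multiplication operators restricted to $\mathbf{Z}_{\bar 1}$, and the position of $\mathbf{Z}_{\bar 1}^2$ relative to $\mathrm{Ann}_{\mathrm L}(\mathbf{Z}_{\bar 0})$ --- which separate almost all of the $\mathbf{z}_{3,i}$; the surviving one-parameter families $\mathbf{z}_{3,1}^{\alpha}$ and $\mathbf{z}_{3,7}$ would then be distinguished by verifying that the parameter $\alpha$ is itself an isomorphism invariant, up to the finitely many identifications already absorbed into the chosen normal forms.
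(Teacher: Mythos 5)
Your proposal follows essentially the same route as the paper: the same decomposition into types $(2,1)$ (handled via Lemma \ref{lem(n-1,1)}, Remarks \ref{remconverse} and \ref{remzero}, and the classification of $2$-dimensional Zinbiel algebras, with a rescaling to reach ${\bf z}_{3,9}$) and $(1,2)$ (handled by imposing the Zinbiel superidentity on the structure constants, collapsing the eight solution families onto (a), (e), (g) via the reduction lemma, and normalising parameters with explicit isomorphisms to reach ${\bf z}_{3,1}^{\alpha},\dots,{\bf z}_{3,8}$). Your added pairwise non-isomorphism check via invariants is a sensible supplement that the paper leaves implicit in its normal forms, but it does not change the method.
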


\section{Finite-dimensional Zinbiel superalgebras are nilpotent}

It is well-known that finite-dimensional Zinbiel algebras are nilpotent over an arbitrary field \cite{zinb5} (also see \cite{nilpotency} for context).
It is a natural question to wonder if this is also true in the case of Zinbiel superalgebras.
Note, for instance, that all the 3-dimensional Zinbiel superalgebras are nilpotent (Theorem \ref{clasi3}). It turns out that the answer is positive, as we will see in this section.

\begin{defn}

Given an algebra ${\bf Z}$ we define the right annihilator of an element $a\in {\bf Z}$ as the set
$$RC(a) = \left\{ x\in {\bf Z}: ax=0\right\}.$$

\end{defn}

\begin{lem}\label{rightcenter}
Given a right-commutative superalgebra ${\bf Z}$, then for homogeneous elements $a_1, a_2\in {\bf Z}$, 
we have $RC(a_1)\subseteq RC(a_1a_2)$.
\end{lem}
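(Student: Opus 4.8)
The plan is to derive the inclusion directly from the right-commutative superidentity recorded in the Remark, namely $(xy)z=(-1)^{|y||z|}(xz)y$ for all homogeneous $x,y,z$; the entire content of the lemma is a single application of this identity, once the problem is reduced to a homogeneous test element. So the first step I would take is this reduction. Since $a_1$ is homogeneous, the left-multiplication operator $L_{a_1}$ sends ${\bf Z}_{i}$ into ${\bf Z}_{i+|a_1|}$, so for $x=x_{\bar 0}+x_{\bar 1}$ the products $a_1x_{\bar 0}$ and $a_1x_{\bar 1}$ lie in complementary graded components. Hence $a_1x=0$ forces both $a_1x_{\bar 0}=0$ and $a_1x_{\bar 1}=0$, which shows that $RC(a_1)=\ker L_{a_1}$ is a graded subspace of ${\bf Z}$; the same remark applies to $RC(a_1a_2)$. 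By linearity it therefore suffices to prove the inclusion for homogeneous $x$, and the general case follows by passing to homogeneous components.

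For the key step I would fix a homogeneous $x$ with $a_1x=0$ and apply the right-commutative superidentity with the substitution $x\mapsto a_1$, $y\mapsto a_2$, $z\mapsto x$, obtaining
$$(a_1a_2)x=(-1)^{|a_2||x|}(a_1x)a_2.$$
Since $a_1x=0$ by hypothesis, the right-hand side vanishes, so $(a_1a_2)x=0$, i.e. $x\in RC(a_1a_2)$. Combined with the reduction above, this gives $RC(a_1)\subseteq RC(a_1a_2)$, as claimed.

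I do not anticipate a genuine obstacle here: the whole argument rests on the fact that right-commutativity lets one transpose the factor $a_2$ past $x$, thereby exposing the vanishing product $a_1x$. The only point that requires a word of care is the passage from homogeneous elements to arbitrary members of $RC(a_1)$, which is settled by the observation that both $RC(a_1)$ and $RC(a_1a_2)$ are graded subspaces; with that in hand, linearity of left multiplication completes the proof.
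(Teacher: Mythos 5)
Your proof is correct and follows essentially the same route as the paper: decompose $x \in RC(a_1)$ into homogeneous components (using homogeneity of $a_1$ to conclude $a_1x_{\bar 0}=a_1x_{\bar 1}=0$), then apply the right-commutative superidentity $(a_1a_2)x=(-1)^{|a_2||x|}(a_1x)a_2$ to each component. The paper's proof is just a more condensed version of the same two steps.
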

\begin{proof}
Given $x\in RC(a_1)$ and suppose $x=x_0 + x_1$, for $x_{i} \in {\bf Z}_{\bar{i}}$. Then, since $a_1$ is homogeneous $a_1x= a_1x_0 + a_1 x_1 = 0$ implies $a_1x_0=0$ and $a_1 x_1=0$. Hence, we have
\begin{center}
    $(a_1a_2)x = (a_1a_2)x_0 + (a_1a_2)x_1 = (a_1x_0)a_2 + (-1)^{|a_2|}(a_1x_1)a_2 = 0.$\end{center}
    
\end{proof}

The first key lemma of this section is the following.

\begin{lem} \label{lem1nil}
Given a finite-dimensional Zinbiel superalgebra ${\bf Z}$, there exists a homogeneous element $e$ such that $e{\bf Z}=0$.    
\end{lem}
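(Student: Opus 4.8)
The plan is to locate the desired element by an extremal argument built on Lemma \ref{rightcenter}. First I would choose a nonzero homogeneous element $e$ for which $\dim RC(e)$ is maximal among all nonzero homogeneous elements (such a maximum exists since $\dim RC(\cdot)\le \dim {\bf Z}<\infty$), and then aim to show that this forces $RC(e)={\bf Z}$, i.e. $L_e=0$. The engine of the proof is the master equivalence obtained directly from the Zinbiel superidentity applied to the triple $(e,a,x)$: since $(ea)x=e\big(ax+(-1)^{|a||x|}xa\big)$, for any homogeneous $a,x$ one has $x\in RC(ea)$ if and only if $ax+(-1)^{|a||x|}xa\in RC(e)$. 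Moreover, whenever $a$ is homogeneous with $ea\neq 0$, the element $ea$ is a nonzero homogeneous element and Lemma \ref{rightcenter} gives $RC(e)\subseteq RC(ea)$, so maximality of $\dim RC(e)$ yields $RC(ea)=RC(e)$. Thus for every homogeneous $a$ with $ea\neq 0$ we obtain the equivalence $x\in RC(e)\iff ax+(-1)^{|a||x|}xa\in RC(e)$, valid for all homogeneous $x$.

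Next I would dispose of the odd part. Suppose some odd $a$ satisfies $ea\neq 0$, so $a\notin RC(e)$. Applying the equivalence above with $x=a$ gives $a\in RC(e)\iff aa+(-1)^{|a|^2}aa\in RC(e)$; but $|a|=1$ makes the right-hand combination $aa-aa=0$, which always lies in $RC(e)$. Hence $a\in RC(e)$, contradicting $ea\neq 0$. Therefore no odd element is moved by $e$, that is, $e{\bf Z}_{\bar 1}=0$.

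For the even part I would exploit the symmetrization. Recall that $\mathbf{Z}_{\bar 0}$ is a finite-dimensional Zinbiel algebra, hence nilpotent by \cite{zinb5}, say ${\bf Z}_{\bar 0}^{s}=0$. Writing $a\circ b:=ab+ba$, a short direct computation with the Zinbiel identity shows that $(\,{\bf Z}_{\bar 0},\circ)$ is a \emph{commutative associative} algebra, and since $a^{\circ k}\in {\bf Z}_{\bar 0}^{k}$ (using ${\bf Z}_{\bar 0}^{i}{\bf Z}_{\bar 0}^{j}\subseteq {\bf Z}_{\bar 0}^{i+j}$), it is moreover nilpotent: $a^{\circ k}=0$ for $k\ge s$. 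Now assume some even $a$ has $ea\neq 0$. Since $(-1)^{|a||x|}=1$ for even $x$, the master equivalence becomes $x\in RC(e)\iff a\circ x\in RC(e)$. Starting from $a\notin RC(e)$ and iterating, I get $a^{\circ k}\notin RC(e)$ for all $k\ge 1$; but $a^{\circ k}=0\in RC(e)$ for large $k$, a contradiction. Hence $e{\bf Z}_{\bar 0}=0$ as well, and combining the two parts gives $e{\bf Z}=0$ with $e\neq 0$.

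The routine verifications are the associativity of $\circ$ and the inclusion $a^{\circ k}\in {\bf Z}_{\bar 0}^{k}$, which I would only sketch. The genuine conceptual step, and the one I expect to be the crux, is recognizing that the maximality of $\dim RC(e)$ together with Lemma \ref{rightcenter} upgrades the inclusion $RC(e)\subseteq RC(ea)$ into the equality $RC(ea)=RC(e)$, because it is this equality that turns the Zinbiel superidentity into a closed self-improving equivalence on $RC(e)$; everything else is then driven by the nilpotency of $\mathbf{Z}_{\bar 0}$ (supplied by \cite{zinb5}) and the sign miracle in the odd case.
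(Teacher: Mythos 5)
Your proof is correct, and it takes a genuinely different route from the paper's. The paper argues constructively: it seeds the construction with a nonzero element $e_0$ of the last nonzero term ${\bf Z}_{\bar 0}^{N-1}$ of the lower central series of the even part, so that $e_0{\bf Z}_{\bar 0}={\bf Z}_{\bar 0}e_0=0$ from the outset, and then greedily replaces $e$ by $ex$ whenever some odd $x$ has $ex\neq 0$; Lemma \ref{rightcenter} gives $RC(e)\subseteq RC(ex)$, and the same sign cancellation you exploit, $(ex)x=e(xx-xx)=0$, puts $x$ in $RC(ex)\setminus RC(e)$, so the right annihilator grows strictly and finite-dimensionality stops the iteration at $RC(e)={\bf Z}$. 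You instead make a non-constructive extremal choice ($e$ maximizing $\dim RC$ over nonzero homogeneous elements) and use maximality to upgrade Lemma \ref{rightcenter} to the equality $RC(ea)=RC(e)$, turning the superidentity into the closed equivalence $x\in RC(e)\iff ax+(-1)^{|a||x|}xa\in RC(e)$. The parity cases then trade places: in the paper the even part is annihilated for free by the choice of seed and the iteration only handles odd elements, whereas in your argument the odd part is free (the sign miracle) and the even part costs you the symmetrization trick --- commutativity and associativity of $a\circ b=ab+ba$ together with $a^{\circ k}\in{\bf Z}_{\bar 0}^{k}$ --- to import the nilpotency of ${\bf Z}_{\bar 0}$ from \cite{zinb5}. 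Both proofs rest on the same pillars (Lemma \ref{rightcenter}, nilpotency of the even part, finite-dimensionality), and the paper's seed tacitly uses the same lower-central-series inclusion ${\bf Z}_{\bar 0}^{i}{\bf Z}_{\bar 0}^{j}\subseteq{\bf Z}_{\bar 0}^{i+j}$ that you invoke; what the paper's version buys is brevity and an explicit algorithmic description of the annihilating element, while yours buys a cleaner conceptual statement (maximality alone forces $L_e=0$) at the price of the auxiliary associative-commutative structure on ${\bf Z}_{\bar 0}$.
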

\begin{proof}
 Since ${\bf Z}_0$ is a Zinbiel algebra, it is nilpotent. Assume it has nilpotency index $N$, then we have some non-zero element $ e_0\in {\bf Z}_{\bar{0}}^{N-1}$ such that $e_0{\bf Z}_{\bar{0}}={\bf Z}_{\bar{0}}e_0=0$. Construct $e$ as follows. 

\begin{enumerate}
    \item Fix $e=e_0$.
    
    \item If there is some $x\in {\bf Z}_{\bar{1}}$ such that $ex\neq0$, set $e_0=ex$. Then $x\in RC(e_0)$, by the Zinbiel superidentity. Otherwise, set $e=e_0$ and finish the iteration.
    
    \item Repeat from (1).
\end{enumerate}

Note that the element obtained in each iteration is homogeneous, so by Lemma \ref{rightcenter}, in each iteration, the right annihilator becomes bigger. Also, since the algebra is finite-dimensional, this process is finite, as it is enough to run it for a basis of ${\bf Z}_{\bar{1}}$. So we conclude $RC(e)={\bf Z}$, that is $e{\bf Z}=0$.
\end{proof}

\begin{lem}\label{ideal1}
Given $I$ a right ideal of a Zinbiel superalgebra ${\bf Z}$, then ${\bf Z}I$ is an ideal.
\end{lem}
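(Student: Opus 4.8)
The plan is to show that the graded subspace ${\bf Z}I$ is stable under both right and left multiplication by ${\bf Z}$, which is exactly what it means to be a two-sided ideal. Since ${\bf Z}I$ is spanned by homogeneous products $ai$ with $a\in{\bf Z}$ and $i\in I$ homogeneous, by bilinearity it suffices to test such a generator against an arbitrary homogeneous $w\in{\bf Z}$ on either side.

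First I would dispatch stability under \emph{right} multiplication, i.e. the product $(ai)w$. Here the right-commutative superidentity $(xy)z=(-1)^{|y||z|}(xz)y$ does all the work: taking $(x,y,z)=(a,i,w)$ gives $(ai)w=(-1)^{|i||w|}(aw)i$, and since $aw\in{\bf Z}$ and $i\in I$ this already lies in ${\bf Z}I$. It is worth noting that this direction does not even use the hypothesis that $I$ is a right ideal.

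The substantive direction is stability under \emph{left} multiplication, i.e. $w(ai)$, which cannot be reduced directly and must be rewritten through the defining Zinbiel superidentity. Applying $(xy)z=x\big(yz+(-1)^{|y||z|}zy\big)$ with $(x,y,z)=(w,a,i)$ yields $(wa)i=w(ai)+(-1)^{|a||i|}w(ia)$, which I rearrange to $w(ai)=(wa)i-(-1)^{|a||i|}w(ia)$. The term $(wa)i$ belongs to ${\bf Z}I$ because $wa\in{\bf Z}$. The second term is precisely where the hypothesis enters: since $I$ is a right ideal, $ia\in I$, and therefore $w(ia)\in{\bf Z}I$ as well. Hence $w(ai)\in{\bf Z}I$, completing the verification.

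The main (and essentially only) obstacle is the observation that a left product $w(ai)$ must be re-expressed via the Zinbiel identity so that the inner product $ia$ surfaces; it is exactly this rewriting that forces the use of the right-ideal property of $I$, and it is the structural reason why a merely one-sided ideal becomes two-sided once multiplied by ${\bf Z}$. Everything remaining is routine sign bookkeeping and bilinear extension to non-homogeneous elements.
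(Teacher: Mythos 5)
Your proof is correct and takes essentially the same route as the paper: the paper's proof is just your two computations written in subset form, namely $({\bf Z}I){\bf Z}\subseteq {\bf Z}^2I\subseteq {\bf Z}I$ (your right-commutativity step) and ${\bf Z}({\bf Z}I)\subseteq {\bf Z}^2I+{\bf Z}(I{\bf Z})\subseteq {\bf Z}I$ (your Zinbiel-superidentity rewriting, with ${\bf Z}(I{\bf Z})\subseteq{\bf Z}I$ being where the right-ideal hypothesis enters).
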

\begin{proof}
We have ${\bf Z}({\bf Z}I) \subseteq {\bf Z}^2I + {\bf Z}(I{\bf Z}) \subseteq {\bf Z}I$ and $({\bf Z}I){\bf Z}\subseteq {\bf Z}^2I\subseteq {\bf Z}I$. 
\end{proof}

The next result follows by the previous lemmas.

\begin{lem}
Any Zinbiel superalgebra of dimension $n>1$ has a proper graded ideal.    
\end{lem}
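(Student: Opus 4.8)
The plan is to produce the required ideal directly from the homogeneous annihilating element supplied by Lemma \ref{lem1nil}, and then to promote it to a two-sided graded ideal via Lemma \ref{ideal1}. Concretely, I would first invoke Lemma \ref{lem1nil} to fix a nonzero homogeneous $e$ with $e{\bf Z}=0$. Because $e$ is homogeneous, the line $\langle e\rangle$ is a graded subspace, and the relation $\langle e\rangle{\bf Z}=e{\bf Z}=0\subseteq\langle e\rangle$ shows it is a graded right ideal. Applying Lemma \ref{ideal1} to $I=\langle e\rangle$ then yields that ${\bf Z}e={\bf Z}\langle e\rangle$ is a two-sided ideal; it is graded since ${\bf Z}e={\bf Z}_{\bar 0}e+{\bf Z}_{\bar 1}e$ is a sum of homogeneous subspaces of opposite parity.

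The argument then splits according to whether ${\bf Z}e$ vanishes. If ${\bf Z}e=0$, then $e$ annihilates ${\bf Z}$ on both sides, so $\langle e\rangle$ is itself a two-sided graded ideal; being one-dimensional it is nonzero and, as $n>1$, proper. If instead ${\bf Z}e\neq 0$, then ${\bf Z}e$ is a nonzero graded two-sided ideal, and it only remains to check that it is proper.

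The single genuine obstacle is exactly this properness, i.e. ruling out ${\bf Z}e={\bf Z}$; I expect everything else to be routine. The resolution I have in mind is short: consider the right-multiplication operator $R_e\colon x\mapsto xe$, whose image is ${\bf Z}e$. Since $e{\bf Z}=0$ forces $ee=0$, we get $R_e(e)=0$ with $e\neq 0$, so $R_e$ is not injective and hence, by finite-dimensionality, not surjective. Therefore ${\bf Z}e\subsetneq{\bf Z}$, and in both cases we have exhibited a proper nonzero graded ideal. This kernel observation for $R_e$ is what prevents the construction from ever returning the whole algebra, and it is the step I would emphasize.
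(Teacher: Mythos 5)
Your proof is correct and takes essentially the same route as the paper's: both start from the homogeneous element $e$ with $e{\bf Z}=0$ given by Lemma \ref{lem1nil}, split into the cases ${\bf Z}e=0$ (where $\langle e\rangle$ itself is the ideal) and ${\bf Z}e\neq 0$ (where Lemma \ref{ideal1} makes ${\bf Z}e$ a two-sided graded ideal). Your rank--nullity argument for properness, using $e\in\ker R_e$ since $ee=0$, is just a rephrasing of the paper's observation that ${\bf Z}e$ is spanned by $e_2e,\ldots,e_ne$ once one takes a basis with $e_1=e$, hence has dimension at most $n-1$.
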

\begin{proof}
Given a finite-dimensional Zinbiel superalgebra ${\bf Z}$, by Lemma \ref{lem1nil}, there exists  an element $e\in {\bf Z}_{\bar{i}}$, for $i=0$ or $i=1$, such that $e{\bf Z}=0$.  
Now, if ${\bf Z}e = 0$, then the vector space generated by $E$ is a proper graded ideal. Conversely, if ${\bf Z}e \neq 0$, choose $I={\bf Z}e$, then since the linear spam of $e$ is a right ideal, $I$ is an ideal, by Lemma \ref{ideal1}.

To show that it is a proper ideal, we have to prove that its dimension is lower than $n$. Choose a basis $e_1, e_2, \ldots, e_{n}$ of ${\bf Z}$ such that $e_1=E$, then the ideal $I$ is linearly generated by the elements $e_1 E=0, e_2 E, \ldots, e_{n}E$, therefore, at most it has dimension $n-1$.

The ideal $I$ is graded as a consequence of $e$ being homogeneous.
\end{proof}

Now, we can prove the first main result of this section.

\begin{lem}\label{solvable}
    Any finite-dimensional Zinbiel superalgebra is solvable.
\end{lem}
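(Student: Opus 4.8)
The plan is to induct on $\dim {\bf Z}$, feeding the proper graded ideal supplied by the previous lemma into the standard principle that an extension of a solvable superalgebra by a solvable one is again solvable. Recall that solvability refers to the derived series ${\bf Z}^{(1)}={\bf Z}$, ${\bf Z}^{(k+1)}={\bf Z}^{(k)}{\bf Z}^{(k)}$ (to be distinguished from the lower central series ${\bf Z}^{k}$ already in use), with ${\bf Z}$ solvable exactly when ${\bf Z}^{(k)}=0$ for some $k$.

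For the base case $\dim{\bf Z}=1$ I would observe that the product necessarily vanishes: if ${\bf Z}={\bf Z}_{\bar 0}=\langle x\rangle$, then writing $xx=\lambda x$ and applying the Zinbiel identity in the form $(xx)x=2x(xx)$ forces $\lambda^2=2\lambda^2$, whence $\lambda=0$; if instead ${\bf Z}={\bf Z}_{\bar 1}$, then $xx\in{\bf Z}_{\bar 0}=0$. Thus ${\bf Z}$ is abelian, hence solvable. For the inductive step, assuming the result in all dimensions $<n$ and taking $\dim{\bf Z}=n>1$, the previous lemma produces a proper graded ideal $I$; inspecting its construction, $I$ is either the line $\langle e\rangle$ or ${\bf Z}e\neq 0$, so it is nonzero and $1\le \dim I\le n-1$. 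Being a graded ideal, $I$ is itself a Zinbiel superalgebra of dimension $<n$, hence solvable by the induction hypothesis, and likewise the quotient ${\bf Z}/I$, of dimension $n-\dim I<n$, is solvable.

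To assemble these I would use the canonical projection $\pi\colon{\bf Z}\to{\bf Z}/I$, a superalgebra homomorphism satisfying $\pi({\bf Z}^{(k)})=({\bf Z}/I)^{(k)}$. Solvability of ${\bf Z}/I$ yields some $k$ with $({\bf Z}/I)^{(k)}=0$, i.e. ${\bf Z}^{(k)}\subseteq I$. Since the derived series is monotone under inclusion ($A\subseteq B$ implies $A^{(m)}\subseteq B^{(m)}$, by an immediate induction from the fact that products respect inclusions), starting from ${\bf Z}^{(k)}\subseteq I=I^{(1)}$ one gets ${\bf Z}^{(k+j)}\subseteq I^{(j+1)}$ for every $j\ge 0$; choosing $j$ with $I^{(j+1)}=0$ forces ${\bf Z}^{(k+j)}=0$, so ${\bf Z}$ is solvable. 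The only point needing care—and where I would double-check that nothing is lost in the non-associative super setting—is precisely this monotonicity and concatenation of the derived series, but both follow at once from the recursive definition; thus I expect bookkeeping rather than a genuine obstacle, the real content having already been isolated in the preceding structural lemmas.
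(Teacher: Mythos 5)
Your proof is correct and follows essentially the same route as the paper: induction on dimension, invoking the proper graded ideal from the preceding lemma, and concluding via the principle that an extension of a solvable superalgebra by a solvable one is solvable. In fact you supply two details the paper leaves implicit—the verification that the one-dimensional algebra has zero product, and the explicit derived-series bookkeeping ${\bf Z}^{(k)}\subseteq I$, ${\bf Z}^{(k+j)}\subseteq I^{(j+1)}$—so nothing is missing.
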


\begin{proof}
Let  ${\bf Z}$ be a finite-dimensional Zinbiel superalgebra. We proceed by induction on the dimension $n$. If $n=1$, we have the trivial one-dimensional algebra, which is solvable. Now, if $n>1$ and the statement is true for up to dimension $n-1$, then $A$ has a proper graded ideal $I$, so $I$ and ${\bf Z}/I$ are Zinbiel superalgebras of dimension lower than $n$, therefore they are solvable. Hence, ${\bf Z}$ itself is solvable.
\end{proof}

\begin{prop}\label{anni1}
    Let $I$ be a minimal ideal of a finite-dimensional Zinbiel superalgebra ${\bf Z}$. Let $J$ be a minimal right ideal of ${\bf Z}$ such that $J\subseteq I$. Then $I=J$.
\end{prop}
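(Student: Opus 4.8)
The plan is to leverage the minimality of the two-sided ideal $I$ together with Lemma \ref{ideal1}. Since $I$ is in particular a right ideal and $J\subseteq I$, the product ${\bf Z}J$ lies in ${\bf Z}I\subseteq I$, and by Lemma \ref{ideal1} it is a two-sided ideal of ${\bf Z}$. By minimality of $I$ we therefore get a clean dichotomy: either ${\bf Z}J=0$ or ${\bf Z}J=I$.

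In the first case ${\bf Z}J=0$ means $J$ is annihilated on the left by all of ${\bf Z}$; since $J$ is already a right ideal, it is then a two-sided ideal. As $J$ is nonzero (being a minimal right ideal) and $J\subseteq I$, minimality of $I$ forces $J=I$, which is exactly the conclusion sought.

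The whole difficulty is concentrated in the second case ${\bf Z}J=I$, and the plan is to show that, assuming $J\subsetneq I$, it cannot occur. First I would exploit that $J$, being a right ideal, is a subalgebra (indeed $JJ\subseteq J{\bf Z}\subseteq J$) and hence a finite-dimensional Zinbiel superalgebra; by Lemma \ref{solvable} it is solvable, so its derived square is proper, $JJ\subsetneq J$. Using the right-commutative superidentity $(ab)z=(-1)^{|b||z|}(az)b$ one checks that $JJ$ is again a right ideal, whence minimality of $J$ gives $JJ=0$. Now the Zinbiel superidentity propagates this: for $z\in{\bf Z}$ and $j,j'\in J$ one has $(zj)j'=z\big(jj'+(-1)^{|j||j'|}j'j\big)=0$, so $IJ=({\bf Z}J)J=0$; and for $a\in I$, the identity $(az)j=a\big(zj+(-1)^{|z||j|}jz\big)$ together with $IJ=0$ and $J{\bf Z}\subseteq J$ yields $a(zj)=0$, i.e. $II=I({\bf Z}J)=0$. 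Moreover, from $J\subseteq I$ we get $I={\bf Z}J\subseteq{\bf Z}I\subseteq I$, hence ${\bf Z}I=I$.

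It then remains to contradict the configuration $II=0$, ${\bf Z}I=I$ for the minimal ideal $I$. The plan is to consider the left annihilator inside $I$, namely $W=\{x\in I:{\bf Z}x=0\}$. Using $II=0$ one verifies that $W$ is a two-sided ideal of ${\bf Z}$ contained in $I$: if $w\in W$ then for $z,z'\in{\bf Z}$ the identity gives $z'(wz)=(z'w)z-(-1)^{|w||z|}z'(zw)=0$, since $z'w=zw=0$, so $W{\bf Z}\subseteq W$, while ${\bf Z}W=0\subseteq W$. By minimality, $W=0$ or $W=I$; and $W=I$ would give ${\bf Z}I=0\neq I$, so necessarily $W=0$. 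The main obstacle is precisely to rule this out, i.e. to prove $W\neq0$: this amounts to showing that the left multiplication operators $L_z|_I$ cannot be jointly surjective on a minimal ideal, equivalently that they share a nonzero common kernel. This is the point where the deeper Engel/nilpotency-type structure of Zinbiel superalgebras must enter — indeed, once one knows that ${\bf Z}$ is nilpotent, ${\bf Z}I=I$ instantly forces $I\subseteq\bigcap_{k}{\bf Z}^{k}=0$, a contradiction, which closes the second case and yields $I=J$.
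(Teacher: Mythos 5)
Your reduction is sound as far as it goes, and its opening matches the standard route (the one the paper takes over from \cite[Proposition 2.2]{zinb5}, to which it defers its own proof): ${\bf Z}J$ is an ideal contained in $I$ by Lemma \ref{ideal1}, minimality of $I$ gives the dichotomy ${\bf Z}J=0$ or ${\bf Z}J=I$, and in the first case $J$ becomes a two-sided ideal inside $I$, whence $J=I$. In the second case your derivations of $JJ=0$ (via solvability and minimality of $J$ as a right ideal), then $IJ=({\bf Z}J)J=0$, then $II=0$ and ${\bf Z}I=I$, are correct computations with the superidentity --- modulo one caveat you should at least flag: every sign manipulation treats elements of $I$ and $J$ as homogeneous, but a minimal (right) ideal of a superalgebra need not be graded; this is precisely the issue the paper only addresses later, in the Remark preceding Corollary \ref{anni11}. (Similarly, if $J$ is not graded it is not itself a Zinbiel superalgebra, so Lemma \ref{solvable} does not literally apply to it; one should instead note that $J^{(k)}\subseteq {\bf Z}^{(k)}$, so solvability of ${\bf Z}$ already forces $JJ\subsetneq J$.)

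The genuine gap is the end of the second case. You correctly reduce everything to showing that $W=\{x\in I:{\bf Z}x=0\}$ is nonzero, you concede this is ``the main obstacle'', and you then close it by appealing to nilpotency of ${\bf Z}$. That is circular: Proposition \ref{anni1} is the key step feeding into Corollary \ref{anni2} and Corollary \ref{anni11}, which are exactly what the paper uses to prove, at the end of the section, that finite-dimensional Zinbiel superalgebras are nilpotent. Nilpotency is therefore not available here; the only tools in hand are solvability (Lemma \ref{solvable}), Lemmas \ref{rightcenter}--\ref{ideal1}, and elementary ideal arguments, and ruling out the configuration ${\bf Z}I=I$ with these tools alone is precisely the nontrivial content of the proposition --- it is where the proof of \cite[Proposition 2.2]{zinb5}, invoked verbatim by the paper, does its real work. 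As written, your proposal isolates the hard point accurately but does not prove it, so it does not establish the statement.
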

\begin{proof}
 The proof is identical to the proof of \cite[Proposition 2.2]{zinb5}.
\end{proof}

\begin{cor}\label{anni2}
    Let $I$ be a minimal ideal of a finite-dimensional Zinbiel superalgebra ${\bf Z}$, then we have $I{\bf Z}={\bf Z}I=0$. Hence, we have $\dim I = 1$.
\end{cor}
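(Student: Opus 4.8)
The plan is to exploit Proposition \ref{anni1}, which tells us that a minimal ideal $I$ is simultaneously a minimal right ideal; thus any right ideal, or any two-sided ideal, strictly contained in $I$ must vanish. Accordingly I would first prove $I{\bf Z}=0$, then deduce ${\bf Z}I=0$, and only afterwards read off $\dim I=1$: once both annihilation statements hold, every nonzero $a\in I$ spans a two-sided ideal $\mathbb{C}a\subseteq I$ (since ${\bf Z}a\subseteq {\bf Z}I=0$ and $a{\bf Z}\subseteq I{\bf Z}=0$), so minimality forces $I=\mathbb{C}a$.

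For $I{\bf Z}=0$, first note that $I{\bf Z}$ is again a right ideal contained in $I$: the Zinbiel superidentity gives $(az)z'=a\big(zz'+(-1)^{|z||z'|}z'z\big)$, so right multiplications reproduce right multiplications, and by minimality $I{\bf Z}$ is $0$ or $I$. To exclude $I{\bf Z}=I$ I would analyse the right multiplication operators $R_z\colon I\to I,\ R_z(a)=az$. The superidentity yields $R_{z'}R_z=R_{z\bullet z'}$, where $z\bullet z'=zz'+(-1)^{|z||z'|}z'z$ is supersymmetric, so the operators $R_z$ pairwise supercommute and generate a finite-dimensional associative subalgebra $\mathcal R\subseteq\operatorname{End}(I)$ with $\mathcal R I=I{\bf Z}$. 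The crucial point is that each $R_z$ is nilpotent: for $z$ odd, $R_z^2=R_{z\bullet z}=0$, while for $z$ even one checks $R_z^{2^k}=c_kR_{w_k}$ with $w_k\in{\bf Z}_{\bar 0}^{\,2^{k}}$, which vanishes for large $k$ because ${\bf Z}_{\bar 0}$ is a nilpotent Zinbiel algebra by \cite{zinb5}. A finite-dimensional associative algebra generated by finitely many supercommuting nilpotent elements is nilpotent, so $\mathcal R$ is nilpotent; hence $I{\bf Z}=\mathcal R I\subsetneq I$, and minimality gives $I{\bf Z}=0$.

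For ${\bf Z}I=0$ I would use that $I{\bf Z}=0$ means $R_z|_I=0$ for all $z$. Then $(zz')a=z\big(z'a+(-1)^{|z'||a|}az'\big)=z(z'a)$, so $z\mapsto L_z|_I$ is an algebra homomorphism, $L_zL_{z'}=L_{zz'}$, and its image $\mathcal L\subseteq\operatorname{End}(I)$ is at once a homomorphic image of the Zinbiel superalgebra ${\bf Z}$ and an associative algebra under composition, with $\mathcal L I={\bf Z}I$. A Zinbiel product which is also associative forces $a(cb)=0$ for all $a,b,c$, that is $\mathcal L(\mathcal L\mathcal L)=\mathcal L^3=0$; thus $\mathcal L$ is nilpotent. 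Since ${\bf Z}I$ is a two-sided ideal contained in $I$ by Lemma \ref{ideal1}, nilpotency of $\mathcal L$ gives ${\bf Z}I=\mathcal L I\subsetneq I$, and minimality of $I$ as an ideal yields ${\bf Z}I=0$.

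The main obstacle is precisely the step from ``each multiplication operator is nilpotent'' to ``the generated operator algebra is nilpotent'', which is where the nilpotency of the even part ${\bf Z}_{\bar 0}$ must be injected. For $\mathcal R$ this is smooth thanks to supercommutativity, but for $\mathcal L$ no commutativity is available; the decisive simplification is that once $R_z|_I=0$ the Zinbiel product becomes genuinely associative on $\mathcal L$, collapsing its nilpotency index to $3$. In a careful writeup I would verify the auxiliary facts ${\bf Z}_{\bar 0}^{\,i}{\bf Z}_{\bar 0}^{\,j}\subseteq{\bf Z}_{\bar 0}^{\,i+j}$ and the elementary lemma on algebras generated by commuting nilpotents, and I would emphasise that solvability alone (Lemma \ref{solvable}) is insufficient here, since the lower power series $\{{\bf Z}^{k}\}$ of a merely solvable algebra need not terminate.
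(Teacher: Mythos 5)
Your proof is correct, but it excludes the two bad branches by a genuinely different mechanism than the paper. The paper's proof is imported verbatim from Towers \cite[Corollary 2.3]{zinb5}: after the same two dichotomies you set up (Proposition \ref{anni1} makes $I$ a minimal \emph{right} ideal, so $I{\bf Z}\in\{0,I\}$; Lemma \ref{ideal1} makes ${\bf Z}I$ an ideal inside $I$, so ${\bf Z}I\in\{0,I\}$), the case $I{\bf Z}=I$ is killed by iterating the Zinbiel superidentity into the \emph{derived} series: $I=(I{\bf Z}){\bf Z}\subseteq I({\bf Z}{\bf Z})$ and inductively $I\subseteq I{\bf Z}^{(k)}$ with ${\bf Z}^{(k+1)}={\bf Z}^{(k)}{\bf Z}^{(k)}$, so solvability (Lemma \ref{solvable}) forces $I=0$; the case ${\bf Z}I=I$ dies the same way once $I{\bf Z}=0$ is known. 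Your substitutes are sound: the relations $R_{z'}R_z=R_{zz'+(-1)^{|z||z'|}z'z}$ and $R_{z'}R_z=(-1)^{|z||z'|}R_zR_{z'}$ do make each $R_z$ nilpotent (odd $z$ squares to zero; even $z$ reduces to iterated squares lying in the nilpotent algebra ${\bf Z}_{\bar 0}$), and an algebra generated by supercommuting nilpotents is nilpotent; for the second branch, the cleanest form of your ``associative plus Zinbiel'' collapse is the operator identity $L_xL_yL_z=L_{(xy)z}=L_xL_yL_z+(-1)^{|y||z|}L_xL_zL_y$, whence $L_xL_zL_y=0$ --- this direct computation also sidesteps your ``homomorphic image'' phrasing, which implicitly requires checking that $\{z\in{\bf Z}:zI=0\}$ is an ideal (true, using $I{\bf Z}=0$, but unnecessary). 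As for what each route buys: the paper needs only solvability of the superalgebra, proved two lemmas earlier in the same section, whereas you lean on the deeper external input that ${\bf Z}_{\bar 0}$ is nilpotent \cite{zinb5}; since the paper invokes that theorem elsewhere anyway, both are legitimate. One correction: your closing claim that ``solvability alone (Lemma \ref{solvable}) is insufficient'' is mistaken. Towers' iteration lands in the derived series, not the lower power series $\{{\bf Z}^k\}$, and the derived series of a solvable algebra does terminate; this is precisely why the paper establishes Lemma \ref{solvable} immediately before this corollary.
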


\begin{proof}
 The proof is identical to the proof of \cite[Corollary 2.3]{zinb5}.
\end{proof}

Observe that the previous result implies that any finite-dimensional Zinbiel superalgebra has a non-trivial annihilator. However, this is not enough to prove that any finite-dimensional Zinbiel superalgebra is nilpotent, we need the next straightforward remark.

\begin{rem}
Let $I$ be a minimal ideal of a Zinbiel superalgebra ${\bf Z}$. Suppose it is generated by some element $e=e_0+e_1 \in {\bf Z}$, where $e_i\in {\bf Z}_i$. Note that $e\in {\rm Ann}({\bf Z})$. Then $e{\bf Z}_{\bar{i}}=0$ implies $e_0 {\bf Z}_{\bar{i}} = 0 $ and $e_1 {\bf Z}_{\bar{i}} = 0$ (resp. ${\bf Z}_{\bar{i}}e=0$ implies ${\bf Z}_{\bar{i}} e_0 = 0 $ and ${\bf Z}_{\bar{i}} e_1 = 0$), and we have $e_i\in {\rm Ann}(A)$. Moreover, $I_i=\langle e_i\rangle$ is an ideal. Furthermore, if $e_i\neq0$, then $I_i$ is a proper graded ideal. Hence, we have the following corollary.

\end{rem}

 \begin{cor}\label{anni11}
Any finite-dimensional Zinbiel superalgebra ${\bf Z}$ has a minimal ideal which is graded. Moreover, there exist a homonegeous element $e\in {\bf Z}$ such that $e\in {\rm Ann}({\bf Z})$.
 \end{cor}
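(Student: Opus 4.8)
The plan is to read the corollary off directly from Corollary \ref{anni2} together with the remark immediately preceding it, using finite-dimensionality only to guarantee the existence of a minimal ideal to feed into them. First I would observe that, since ${\bf Z}$ is finite-dimensional and nonzero, the collection of nonzero ideals contains one of least dimension, and any such ideal $I$ is by definition a minimal ideal; in particular ${\bf Z}$ itself shows the collection is nonempty. (The degenerate case ${\bf Z}=0$ is vacuous.) No appeal to solvability or to the proper-ideal lemma is needed for this existence step.

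Having fixed a minimal ideal $I$, I would apply Corollary \ref{anni2} to conclude that $I{\bf Z}={\bf Z}I=0$ and $\dim I=1$, so that $I\subseteq {\rm Ann}({\bf Z})$. Writing $I=\langle e\rangle$ and splitting $e=e_0+e_1$ into its even and odd homogeneous components, the remark preceding the corollary supplies exactly the two facts I want: both $e_0$ and $e_1$ again lie in ${\rm Ann}({\bf Z})$, and whenever $e_i\neq 0$ the line $I_i=\langle e_i\rangle$ is a one-dimensional, hence minimal, graded ideal.

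The conclusion then follows by choosing a nonzero homogeneous component. Since $e\neq0$, at least one of $e_0,e_1$ is nonzero; picking such an index $i$ yields a homogeneous element $e_i\in {\rm Ann}({\bf Z})$, which is the second assertion, while $I_i=\langle e_i\rangle$ is the required graded minimal ideal, which is the first. I do not anticipate a genuine obstacle, as the substantive work has already been done upstream: the one-dimensionality of minimal ideals comes from Corollary \ref{anni2}, and the homogeneity of the components of an annihilator element from the preceding remark. The only subtlety worth flagging is that a minimal ideal need not be graded on the nose, since its generator may be honestly inhomogeneous; one must therefore resist claiming that $I$ itself is graded and instead descend to the ideal generated by a single nonzero homogeneous component. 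This is the one place where the $\mathbb{Z}_2$-grading genuinely intervenes, and it is precisely what the remark is designed to handle.
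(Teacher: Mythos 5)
Your proposal is correct and takes essentially the same route as the paper: Corollary \ref{anni11} is stated there as an immediate consequence of Corollary \ref{anni2} together with the remark preceding it, and you assemble exactly those two ingredients — existence of a minimal ideal by finite-dimensionality, its one-dimensionality and containment in ${\rm Ann}({\bf Z})$, then passage to a nonzero homogeneous component of the generator. Your flagged subtlety (that $I$ itself need not be graded, so one must descend to the span $\langle e_i\rangle$ of a homogeneous component) is precisely the point the paper's remark is designed to address.
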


Finally, Corollary \ref{anni11} enables us to prove the main result of this section. 

\begin{thm}
    Any finite-dimensional Zinbiel superalgebra is nilpotent.
\end{thm}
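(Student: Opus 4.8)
The plan is to argue by induction on $n = \dim {\bf Z}$, peeling off a one-dimensional graded annihilator ideal at each step using Corollary \ref{anni11}. The base case $n=1$ is immediate: the one-dimensional superalgebra satisfies ${\bf Z}^2={\bf Z}{\bf Z}=0$ (either the generator is even, giving the trivial algebra, or odd, and then ${\bf Z}_{\bar 0}{\bf Z}_{\bar 1}=0$ forces ${\bf Z}^2=0$), so it is nilpotent. Assume the statement holds for all Zinbiel superalgebras of dimension less than $n$, and let ${\bf Z}$ have dimension $n>1$.

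For the inductive step I would invoke Corollary \ref{anni11} to obtain a homogeneous element $e\in {\rm Ann}({\bf Z})$, and set $I=\langle e\rangle$. Because $e$ is homogeneous, $I$ is a one-dimensional graded ideal, and since $e\in {\rm Ann}({\bf Z})$ we have ${\bf Z}I=I{\bf Z}=0$ (this is exactly the content of Corollary \ref{anni2} applied to the minimal ideal $I$). The quotient ${\bf Z}/I$ then inherits the $\mathbb{Z}_2$-grading from ${\bf Z}$, so it is again a Zinbiel superalgebra, now of dimension $n-1$; by the induction hypothesis it is nilpotent.

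The key computation is to transfer nilpotency of the quotient back to ${\bf Z}$. One checks by an easy induction on $k$ that the powers descend to the quotient as $({\bf Z}/I)^{k}=({\bf Z}^{k}+I)/I$, using that the quotient multiplication is induced by that of ${\bf Z}$. Hence if $({\bf Z}/I)^{s}=0$ for some $s$, then ${\bf Z}^{s}\subseteq I$, and therefore
$$
{\bf Z}^{s+1}={\bf Z}\,{\bf Z}^{s}\subseteq {\bf Z}I=0.
$$
Thus ${\bf Z}^{s+1}=0$ and ${\bf Z}$ is nilpotent, completing the induction.

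The genuinely substantive work has already been carried out in the preceding lemmas and corollaries: the existence of a \emph{homogeneous} element lying in the (two-sided) annihilator, which is what Lemma \ref{lem1nil} together with Proposition \ref{anni1}, Corollary \ref{anni2} and the grading remark secure. Given that, the present theorem is a short, essentially formal induction, and the only point requiring care is the compatibility $({\bf Z}/I)^{k}=({\bf Z}^{k}+I)/I$ for the left-normed power series ${\bf Z}^{k+1}={\bf Z}\,{\bf Z}^{k}$ used in the definition of nilpotency; I expect this verification, rather than any conceptual difficulty, to be the main thing to pin down.
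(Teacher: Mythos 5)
Your proof is correct and follows essentially the same route as the paper: induction on the dimension, using Corollary \ref{anni11} to produce a one-dimensional graded ideal $I=\langle e\rangle$ generated by a homogeneous annihilator element with ${\bf Z}I=I{\bf Z}=0$, and passing to the quotient ${\bf Z}/I$. You are in fact more explicit than the paper, which ends with ``Therefore, ${\bf Z}$ is nilpotent'' without spelling out the transfer step; your verification that $({\bf Z}/I)^{s}=({\bf Z}^{s}+I)/I$ and hence ${\bf Z}^{s+1}\subseteq {\bf Z}I=0$ supplies exactly that omitted detail.
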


\begin{proof}
Let  ${\bf Z}$ be a finite-dimensional Zinbiel superalgebra. We proceed by induction on the dimension $n$. 
If $n=1$, we have the trivial one-dimensional algebra, which is nilpotent. Now, suppose we have that any finite-dimensional Zinbiel superalgebra is nilpotent up to dimension $n-1$ for $n>1$. Since ${\bf Z}$ has a graded ideal $I$ of dimension one (generated by a homogeneous element) such that ${\bf Z}I=I{\bf Z}=0$, then ${\bf Z}/I$ is a Zinbiel superalgebra of dimension $n-1$, and so it is nilpotent. Therefore, ${\bf Z}$ is nilpotent.
\end{proof}





\bibliographystyle{amsplain}

\end{document}